\tikzset{>=latex}
\newcommand{\stringdiagram}[1]{\[\begin{tikzpicture}[scale=.5, thick]
#1\end{tikzpicture}\]}
\newcommand{\stringdiagramlabel}[2]{\begin{equation}\label{#2}\begin{tikzpicture}[scale=.5, thick, baseline=(current  bounding  box.center)]
#1\end{tikzpicture}\end{equation}}
\newcommand{\identity}[2]{\draw (#1,#2+1) -- (#1,#2-1);}
\newcommand{\unit}[2]{\draw (#1,#2+.5) circle [radius=.1]; \draw (#1,#2+.4) -- (#1,#2-1);}
\newcommand{\multiplication}[2] {\draw (#1,#2) -- (#1-1,#2+1); \draw (#1,#2) -- (#1+1,#2+1); \draw (#1,#2) -- (#1,#2-1);}
\newcommand{\tripleprod}[2] {\draw (#1,#2) -- (#1-1,#2+1); \draw (#1,#2) -- (#1+1,#2+1); \draw (#1,#2+1) -- (#1,#2-1);}
\newcommand{\comultiplication}[2] {\draw (#1,#2) -- (#1-1,#2-1); \draw (#1,#2) -- (#1+1,#2-1); \draw (#1,#2) -- (#1,#2+1);}
\newcommand{\copairing}[2]{\draw (#1+1,#2-1) arc(0:180:1);}
\newcommand{\pairing}[2]{\draw (#1-1,#2+1) arc(180:360:1);}
\newcommand{\equals}[2]{\node at (#1,#2) {$=$};}
\newcommand*{\da@rightarrow}{\mathchar"0\hexnumber@\symAMSa 4B }
\newcommand*{\xdashrightarrow}[2][]{%
  \mathrel{%
    \mathpalette{\da@xarrow{#1}{#2}{}\da@rightarrow{\,}{}}{}}}
\newcommand*{\da@xarrow}[7]{
\sbox0{$\ifx#7\scriptstyle\scriptscriptstyle\else\scriptstyle\fi#5#1#6\m@th$}
  \sbox2{$\ifx#7\scriptstyle\scriptscriptstyle\else\scriptstyle\fi#5#2#6\m@th$}
  \sbox4{$#7\dabar@\m@th$}
  \dimen@=\wd0 
  \ifdim\wd2 >\dimen@
    \dimen@=\wd2 
  \fi
  \count@=2 %
  \def\da@bars{\dabar@\dabar@}%
  \@whiledim\count@\wd4<\dimen@\do{
    \advance\count@\@ne \expandafter\def\expandafter\da@bars\expandafter{\da@bars\dabar@ }}
  \mathrel{#3}\mathrel{\mathop{\da@bars}\limits \ifx\\#1\\\else _{\copy0} \fi \ifx\\#2\\
   \else  ^{\copy2} \fi}   \mathrel{#4}}
\newcommand{\cat}{\mathcal{C}}
\DeclareMathOperator{\Ob}{Ob}
\DeclareMathOperator{\Mor}{Mor}
\newcommand{\catname}[1]{\mathbf{#1}}
\newcommand{\rel}{\catname{Rel}}
\newcommand{\srel}{\catname{SRel}}
\newcommand{\Span}{\catname{Span}}
\newcommand{\FSpan}{\catname{FSpan}}
\newcommand{\Symp}{\catname{Symp}}
\newcommand{\vect}{\catname{Vect}}
\newcommand{\id}{\mathrm{id}}
\newcommand{\reltolabel}[1]{\xdashrightarrow{#1}\!\!}
\newcommand{\tolabel}[1]{\xrightarrow{#1}}
\newcommand{\fromlabel}[1]{\xleftarrow{#1}}
\newcommand{\idx}{\mathbf{1}}
\newcommand{\arrows}{\,\substack{\to \\[-1em] \to}\,}
\newcommand{\threearrows}{\,\substack{\to \\[-1em] \to \\[-1em] \to}\,}
\newcommand{\isoto}{\stackrel{\sim}{\to}}
\DeclareMathOperator{\WW}{WW}
\newcommand{\fprod}{\star} 
\newcommand{\spanto}{\mathrel{\text{\rotatebox[origin=c]{33}{$\gets$}}\mspace{-6.9mu}\text{\rotatebox[origin=c]{-33}{$\to$}}}} 
\newcommand{\suchthat}{\mid}
\newcommand{\kk}{\Bbbk}
\newcommand{\bitimes}[2]{\,\vphantom{\times}_{#1}\!\! \times_{#2}\!}
\DeclareMathOperator{\im}{im}
\newcommand{\boundary}{\partial}
\newcommand{\mat}[1]{\begin{bmatrix}#1\end{bmatrix}}
\newtheorem{thm}{Theorem}[section]
\newtheorem{prop}[thm]{Proposition}
\newtheorem{lemma}[thm]{Lemma}
\newtheorem{cor}[thm]{Corollary}
\theoremstyle{definition}
\newtheorem{definition}[thm]{Definition}
\newtheorem{remark}[thm]{Remark}
\newtheorem{example}[thm]{Example}
\numberwithin{equation}{section}
\begin{document}

\title{Frobenius objects in the category of spans}
\author{Ivan Contreras}
\address{Department of Mathematics\\
Amherst College\\
31 Quadrangle Drive\\
Amherst, MA 01002}
\email{icontreraspalacios@amherst.edu}
\author{Molly Keller}
\author{Rajan Amit Mehta}
\address{Department of Mathematics \& Statistics\\
Smith College\\
44 College Lane\\
Northampton, MA 01063}
\email{mkeller@smith.edu}
\email{rmehta@smith.edu}

\subjclass[2020]{
18B10, 
18B40, 
18C40, 
18N50, 
20L05, 
57R56
} 
\keywords{spans, category of relations, symplectic category, Frobenius algebra, groupoid, simplicial set, topological quantum field theory}

\begin{abstract}
We consider Frobenius objects in the category $\Span$, where the objects are sets and the morphisms are isomorphism classes of spans of sets. We show that such structures are in correspondence with data that can be characterized in terms of simplicial sets. An interesting class of examples comes from groupoids.

Our primary motivation is that $\Span$ can be viewed as a set-theoretic model for the symplectic category, and thus Frobenius objects in $\Span$ provide set-theoretic models for classical topological field theories. The paper includes an explanation of this relationship.

Given a finite commutative Frobenius object in $\Span$, one can obtain invariants of closed surfaces with values in the natural numbers. We explicitly compute these invariants in several examples, including examples arising from abelian groups.
\end{abstract}

\maketitle

\section{Introduction}

Frobenius algebras play an important role in the study of topological quantum field theory (TQFT). In particular, commutative Frobenius algebras are known to classify $2$-dimensional TQFTs \cites{abrams, dijkgraaf:thesis, kock-book}. Noncommutative Frobenius algebras appear in the classification of extended TQFTs \cite{schommer-pries:thesis}, open-closed TQFTs \cites{lauda-pfeiffer, BCT}, and lattice TQFTs \cites{lauda-pfeiffer:statesum, fhk}.

The appearance of Frobenius algebras in these contexts is closely related to the fact that Frobenius algebras can be defined completely in terms of algebraic data in the category of vector spaces. This fact allows the definition to be generalized by replacing the category of vector spaces with an arbitrary monoidal category $\cat$, giving the notion of a \emph{Frobenius object in $\cat$}. (This point of view appears, for example, in \cites{lauda-pfeiffer, lauda-pfeiffer:statesum, fuchs-stigner,kock-book, heunen-vicary:book, walton-yadav}.) If $\cat$ is symmetric monoidal, then one can define commutative Frobenius objects, which correspond to $2$-dimensional oriented topological field theories (TFTs) with values in $\cat$.

In this paper, we study Frobenius objects in the monoidal category $\Span$, where the objects are sets, the morphisms are isomorphism classes of spans of sets, and the monoidal structure is given by the Cartesian product. There are several reasons that $\Span$ is an interesting category to study in this context.
\begin{itemize}
    \item Our primary motivation is that $\Span$ is a good set-theoretic ``toy model'' for the Wehrheim-Woodward symplectic category \cite{ww}; this is explained in Section \ref{sec:spanww}. It is well-known that symplectic manifolds with compatible algebraic structures (such as symplectic groupoids \cite{cdw}) are closely connected to constructions of topological sigma models (such as the Poisson sigma model \cites{aksz, cattaneo-felder:poissonsigma,schaller-strobl}). It will be interesting to see a similar relationship established using functorial methods, and the present work provides a step in that direction.
    \item Since $\Span$ is symmetric monoidal, one can further consider commutative Frobenius objects, which classify $2$-dimensional TFTs with values in $\Span$. The resulting topological invariants of closed surfaces take values in the monoid of cardinalities. In particular, in the finite case, the invariants are natural numbers.
    
    There are simple examples where the invariants are sufficient to distinguish surfaces of different genus (see Section \ref{sec:examples}). This makes for a ``proof of concept'' that $\Span$ can be a useful target category for the more interesting case of $3$-dimensional TFTs.
    \item If we restrict to the subcategory of finite spans, then there is a symmetric monoidal functor to the category $\vect_\kk$ of vector spaces over a field $\kk$. Thus, a finite Frobenius object in $\Span$ gives a Frobenius algebra that is universal, in the sense that it is defined over any base field. Some well-known examples of Frobenius algebras, such as matrix algebras and group algebras, arise in this manner. 
\end{itemize}

Our main result is a one-to-one correspondence, up to isomorphism, between Frobenius objects in $\Span$ and simplicial sets that are equipped with an automorphism of the set of $1$-simplices, satisfying certain properties. In a sense, this correspondence gives us a way to relate TFTs to geometric structures. A simple example is that one can form a Frobenius object in $\Span$ from a group $G$, and the corresponding simplicial set is the nerve of $G$, whose realization is the classifying space $BG$. The automorphism of the set of $1$-simplices can be the inverse map, or more generally, it can be the inverse map ``twisted'' by an element of $G$.

More generally, one can form a Frobenius object in $\Span$ from a groupoid. This fact is related to \cite{hcc} (also see \cite{heunen-vicary:book}), where there is a correspondence between groupoids and special dagger Frobenius objects in $\rel$, the category of sets and relations, and \cite{Mehta-Zhang}, where Frobenius objects in $\rel$ were similarly described in terms of simplicial sets. However, the results here aren't strictly a generalization of the results of \cite{Mehta-Zhang}, since not every Frobenius object in $\rel$ can be lifted to a Frobenius object in $\Span$.

There is a close relationship between our work and that of Stern \cite{stern:2segal}, who proved an $\infty$-categorical equivalence between monoid objects in $\Span(\cat)$ and $2$-Segal simplicial objects in $\cat$, and between Calabi-Yau algebra objects in $\Span(\cat)$ and $2$-Segal cyclic objects in $\cat$. The objects that Stern studies are equipped with higher coherence data, making them more restrictive than ours, and thus our results don't directly follow from his. Nonetheless, Stern's work implies a rich source of examples of Frobenius objects in $\Span$, coming from $2$-Segal cyclic sets, and our work suggests directions in which Stern's results can be extended. We give more details about the relationship to $2$-Segal structures in Sections \ref{sec:2segal} and \ref{sec:symmetric}.

This work is also  related, at least in spirit, to recent work of Calaque, Haugseng, and Scheimbauer \cites{calaque:lagrangian, calaque:threelectures, Haugseng, CPTTV, CHS} 
on constructing (extended) TFTs with values in a version of the symplectic category from shifted symplectic structures via the AKSZ formalism. TFTs with values in $\Span$ reveal a shadow of these constructions in a context where it is easier to describe explicit examples.

The structure of the paper is as follows. In Section \ref{sec:spanandww}, we give a brief overview of the category $\Span$, and we explain how $\Span$ is related to the Wehrheim-Woodward construction. In Section \ref{sec:monoids}, we consider monoids in $\Span$, and we prove that they are in correspondence with simplicial sets satisfying certain properties. In Section \ref{sec:frob}, we consider Frobenius objects in $\Span$, and we prove that a Frobenius structure on a monoid can be encoded by an automorphism of the set of $1$-simplices of the corresponding simplicial set, satisfying certain properties. In Section \ref{sec:examples}, we describe some examples of commutative Frobenius objects in $\Span$, including examples associated to abelian groups and two infinite families of $2$-element examples, and we find explicit formulas for the associated topological invariants. Finally, in Section \ref{sec:vect}, we briefly describe how finite Frobenius objects in $\Span$ give rise to Frobenius algebras.

\subsection*{Acknowledgements}
The authors would like to thank Chris Heunen, Adele Long, Sophia Marx, and Sasha Peterka for helpful conversations on topics related to the paper. We especially thank Walker Stern for suggesting the relationship to $2$-Segal conditions, and for helping us figure out the specifics thereof. I.C. and R.A.M. would also like to thank the LEGO Group for entertaining our children long enough for us to finish this work during the pandemic.

\section{Spans and the Wehrheim-Woodward construction}\label{sec:spanandww}

In this section, we review the definition of the category of spans, and we explain why it can be viewed as a set-theoretic model for the symplectic category. Some references for spans are \cites{benabou, cks, dawson-pare-pronk:universal}.

\subsection{The category of spans}\label{sec:span}

Given two sets $X,Y$, a \emph{span} from $X$ to $Y$ is a triplet $(A,f_1,f_2)$, where $A$ is a set, $f_1$ is a map from $A$ to $X$, and $f_2$ is a map from $A$ to $Y$. It can be helpful to visualize a span as a diagram as in Figure \ref{fig:span}.
    \begin{figure}[th]
    \begin{tikzcd}
    & A \arrow{dr}{f_2} \arrow[swap]{dl}{f_1} & \\
    X & & Y
    \end{tikzcd}
        \caption{A span.}
        \label{fig:span}
    \end{figure}

Two spans $(A,f_1,f_2)$, $(A',f_1',f_2')$ from $X$ to $Y$ are considered isomorphic if there exists a bijection $\phi: A \to A'$ such that the diagram
    \begin{equation}
    \label{diag:isospan}
    \begin{tikzcd}
    & A \arrow[swap]{drr}{f_2} \arrow[swap]{dl}{f_1} \arrow{r}{\phi} & A' \arrow{dr}{f_2'} \arrow{dll}{f_1'} & \\
    X & & & Y
    \end{tikzcd}
    \end{equation}
commutes.

The category of spans, denoted $\Span$, is defined as follows. 
\begin{itemize}
    \item The objects of $\Span$ are sets.
    \item If $X$ and $Y$ are sets, then a morphism from $X$ to $Y$ is an isomorphism class of spans from $X$ to $Y$.
    \item Composition of morphisms is given by pullback; see Figure \ref{fig:composition}.
    \begin{figure}[th]
        \begin{tikzcd}
    && A \bitimes{f_2}{g_1} B \arrow[swap]{dl}{p_1} \arrow{dr}{p_2} && \\
    & A \arrow[swap]{dl}{f_1} \arrow{dr}{f_2} && B \arrow[swap]{dl}{g_1} \arrow{dr}{g_2} & \\
    X && Y && Z
    \end{tikzcd}
        \caption{Composition of spans.}
        \label{fig:composition}
    \end{figure}
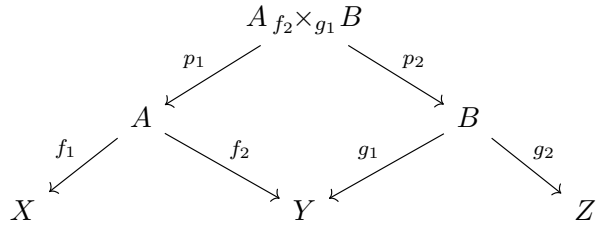
    \item The Cartesian product endows $\Span$ with the structure of a monoidal category.
\end{itemize}

We use the notation $f:X \spanto Y$ to denote a morphism $f$ from $X$ to $Y$ in $\Span$. Because the span representing a morphism in $\Span$ is only well-defined up to isomorphism, it can be difficult to extract equations involving maps of sets from equations involving morphisms in $\Span$. However, there are some identities in $\Span$ that can be upgraded to identities at the level of sets. 

For any set $X$, the identity morphism $\idx: X \spanto X$ has a canonical representative
\[ 
\begin{tikzcd}
& X \arrow[swap]{dl}{1} \arrow{dr}{1} & \\
X && X
\end{tikzcd}
\]
through which all other representatives uniquely factor. Furthermore, the compositions of a span $(A, f_1, f_2)$ with identity morphisms have natural representatives
\[
\begin{tikzcd}
  && A \arrow[swap]{dl}{1} \arrow{dr}{f_2}&& \\
  & A \arrow[swap]{dl}{f_1} \arrow{dr}{f_2} && Y \arrow[swap]{dl}{1} \arrow{dr}{1} & \\
  X && Y && Y
\end{tikzcd}
\]
and
\[
\begin{tikzcd}
  && A \arrow[swap]{dl}{f_1} \arrow{dr}{1}&& \\
  & X \arrow[swap]{dl}{1} \arrow{dr}{1} && A \arrow[swap]{dl}{f_1} \arrow{dr}{f_2} & \\
  X && X && Y
\end{tikzcd}
\]
where the identity map on $A$ gives an isomorphism with $(A, f_1, f_2)$.

One of the properties of a monoidal category is the following ``slide move''. If $\alpha: X \spanto Y$ and $\beta: W \spanto Z$ are morphisms in $\Span$, then $(\idx \times \beta) \circ (\alpha \times \idx) = \alpha \times \beta = (\alpha \times \idx) \circ (\idx \times \beta)$. If $\alpha$ and $\beta$ are represented by $(A, f_1, f_2)$ and $(B, g_1, g_2)$, respectively, then $(\idx \times \beta) \circ (\alpha \times \idx)$ is represented by
\[
\begin{tikzcd}
    && A \times B \arrow[swap]{dl}{1 \times g_1} \arrow{dr}{f_2 \times 1} && \\
    & A \times W \arrow[swap]{dl}{f_1 \times 1} \arrow{dr}{f_2 \times 1} && Y \times B \arrow[swap]{dl}{1 \times g_1} \arrow{dr}{1 \times g_2} & \\
    X \times W && Y \times W && Y \times Z
\end{tikzcd}
\]
and $(\alpha \times \idx) \circ (\idx \times \beta)$ is represented by
\[
\begin{tikzcd}
    && A \times B \arrow[swap]{dl}{f_1 \times 1} \arrow{dr}{1 \times g_2} && \\
    & X \times B \arrow[swap]{dl}{1 \times g_1} \arrow{dr}{1 \times g_2} && A \times Z \arrow[swap]{dl}{f_1 \times 1} \arrow{dr}{f_2 \times 1} & \\
    X \times W && X \times Z && Y \times Z
\end{tikzcd}
\]
where the identity map on $A \times B$ gives an isomorphism between the above two compositions. The conclusion is that, if we can recognize two spans as being related via composition with the identity and/or the slide move, as given above, then we know that the spans are not only isomorphic, but equal.

Finally, the following basic fact about $\Span$ will be used later in the paper. We note that this proof is similar to that of \cite{Haugseng}*{Lemma 8.2}, but specialized to the present situation.
\begin{prop}\label{prop:spaniso}
A span $(A, f_1, f_2)$ represents an isomorphism in $\Span$ if and only if $f_1$ and $f_2$ are bijections.
\end{prop}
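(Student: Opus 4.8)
The plan is to prove both directions directly, using the explicit model of composition by pullback described above.

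For the ``if'' direction, suppose $f_1 \colon A \to X$ and $f_2 \colon A \to Y$ are both bijections. Then I claim the span $(A, f_2, f_1)$ from $Y$ to $X$ is an inverse. To see this, I would compute the composition of $(A, f_1, f_2)$ with $(A, f_2, f_1)$ by forming the relevant pullback. The fiber product $A \bitimes{f_2}{f_2} A$ is, since $f_2$ is a bijection, canonically identified with $A$ via the diagonal, and under this identification the two legs of the composite span both become $f_1 \colon A \to X$. Hence the composite is the span $(A, f_1, f_1)$, which represents $\idx_X$ because $f_1$ is a bijection (it factors through the canonical representative of the identity, or more directly, it is isomorphic to $(X, \id, \id)$ via $f_1$). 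The other composite is handled symmetrically, giving $\idx_Y$.

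For the ``only if'' direction, suppose $(A, f_1, f_2)$ represents an isomorphism, with inverse represented by some span $(B, g_1, g_2)$ from $Y$ to $X$. Then the pullback $A \bitimes{f_2}{g_1} B$, with its two legs to $X$, represents $\idx_X$, and since (as recalled above) the canonical representative $(X, \id, \id)$ of $\idx_X$ is one through which every representative uniquely factors, there is a bijection from $A \bitimes{f_2}{g_1} B$ to $X$ compatible with the maps to $X$. Similarly $B \bitimes{g_2}{f_1} A \cong Y$ compatibly. From the bijection $A \bitimes{f_2}{g_1} B \cong X$ I want to extract that $f_1$ is injective and surjective: surjectivity of $f_1$ follows because the composite map $A \bitimes{f_2}{g_1} B \to A \to X$ (which equals the bijection) is surjective; injectivity will require also using the symmetric statement about $B \bitimes{g_2}{f_1} A \cong Y$ together with the unit/slide identities, to control the sizes of fibers.

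The main obstacle is the ``only if'' direction: an isomorphism in $\Span$ need not have an inverse of the form $(A, f_2, f_1)$, so one cannot simply reverse legs, and the isomorphism-class ambiguity means the inverse span $(B, g_1, g_2)$ is genuinely a different set. The key technical point to nail down is extracting honest statements about the maps $f_1, f_2$ (injectivity and surjectivity) from the fact that two pullbacks are isomorphic to $X$ and $Y$ respectively; this is where the ``identities that upgrade to the level of sets'' discussed before the proposition — the canonical factorization through the identity representative — do the real work, and I expect this bookkeeping to be the crux of the argument.
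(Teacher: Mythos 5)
Your ``if'' direction is correct and coincides with the paper's: take the candidate inverse $(A,f_2,f_1)$, note that injectivity of $f_2$ identifies $A \bitimes{f_2}{f_2} A$ with the diagonal copy of $A$, and observe that the resulting span $(A,f_1,f_1)$ is isomorphic to the identity span on $X$ via the bijection $f_1$ (symmetrically for $Y$).

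The ``only if'' direction has a genuine gap, exactly where you flag it: you prove surjectivity of $f_1$ (and the analogous surjectivities) but never prove injectivity, deferring it to ``unit/slide identities'' and ``controlling the sizes of fibers.'' Neither of those is the right tool --- the slide move is irrelevant here, and fiber-counting is delicate for infinite sets. The missing step is elementary but needs to be said. From the bijection $\phi\colon X \to A \bitimes{f_2}{g_1} B$ satisfying $f_1 p_1 \phi = g_2 p_2 \phi = \id_X$ you get not only that $f_1$ and $g_2$ are surjective, but also that the projections $p_1,p_2$ of the pullback are injective (each $p_i\phi$ is injective because it has a left inverse, and $\phi$ is a bijection onto the pullback); likewise, from $\phi'\colon Y \to B \bitimes{g_2}{f_1} A$ you get that $g_1,f_2$ are surjective and $q_1,q_2$ are injective. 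Now combine these crosswise: if $f_2(a)=f_2(a')$, surjectivity of $g_1$ gives some $b\in B$ with $g_1(b)=f_2(a)$, so $(a,b)$ and $(a',b)$ both lie in $A \bitimes{f_2}{g_1} B$ and have the same image under $p_2$, whence $a=a'$ and $f_2$ is injective; if $f_1(a)=f_1(a')$, surjectivity of $g_2$ gives $b$ with $g_2(b)=f_1(a)$, so $(b,a)$ and $(b,a')$ both lie in $B \bitimes{g_2}{f_1} A$ and have the same image under $q_1$, whence $a=a'$ and $f_1$ is injective. This is precisely how the paper closes the argument; without some version of it your proof of the forward implication is incomplete.
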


\begin{proof}
A span $(A, f_1, f_2)$ from $X$ to $Y$ represents an isomorphism if and only if there exists a span $(B, g_1, g_2)$ from $Y$ to $X$ such that the compositions
\begin{equation}\label{diag:spaniso1}
\begin{tikzcd}
&& A \bitimes{f_2}{g_1} B \arrow[swap]{dl}{p_1} \arrow{dr}{p_2} && \\
& A \arrow[swap]{dl}{f_1} \arrow{dr}{f_2} && B \arrow[swap]{dl}{g_1} \arrow{dr}{g_2} \\
X && Y && X
\end{tikzcd}
\end{equation}
and 
\begin{equation}\label{diag:spaniso2}
\begin{tikzcd}
&& B \bitimes{g_2}{f_1} A \arrow[swap]{dl}{q_1} \arrow{dr}{q_2} && \\
& B \arrow[swap]{dl}{g_1} \arrow{dr}{g_2} && A \arrow[swap]{dl}{f_1} \arrow{dr}{f_2} \\
Y && X && Y
\end{tikzcd}
\end{equation}
are isomorphic to the identity spans on $X$ and $Y$, respectively. Here, we use $p_i$ and $q_i$ to denote the projection maps onto the $i$th component.

The composition \eqref{diag:spaniso1} is isomorphic to the identity span on $X$ if and only if there is a bijection $\phi: X \to A \bitimes{f_2}{g_1} B$ such that 
\begin{equation}\label{eqn:spaniso1}
    f_1 p_1 \phi = g_2 p_2 \phi = 1.
\end{equation}
Similarly, \eqref{diag:spaniso2} is isomorphic to the identity span on $Y$ if and only if there is a bijection $\phi': Y \to B \bitimes{g_2}{f_1} A$ such that 
\begin{equation}\label{eqn:spaniso2}
    g_1 q_1 \phi' = f_2 q_2 \phi' = 1.
\end{equation}
If $f_1$ and $f_2$ are bijections, then we can take $B=A$, $g_1 = f_2$, $g_2 = f_1$, and
\begin{align*}
    \phi &= (f_1^{-1},f_1^{-1}): X \to A \bitimes{f_2}{f_2} A,\\
    \phi' &= (f_2^{-1},f_2^{-1}): Y \to A \bitimes{f_1}{f_1} A,
\end{align*}
and \eqref{eqn:spaniso1} and \eqref{eqn:spaniso2} will be satisfied.

On the other hand, given bijections $\phi$ and $\phi'$ satisfying \eqref{eqn:spaniso1} and \eqref{eqn:spaniso2}, it follows that $f_1$, $f_2$, $g_1$, and $g_2$ are surjective, and that $p_1$, $p_2$, $q_1$, and $q_2$ are injective. Together, the injectivity of $p_2$ and the surjectivity of $g_1$ imply that $f_2$ is injective. Similarly, the injectivity of $q_1$ and the surjectivity of $g_2$ imply that $f_1$ is injective. Thus, $f_1$ and $f_2$ are bijections.
\end{proof}

\subsection{The Wehrheim-Woodward construction}\label{sec:spanww}

Given two sets $X,Y$, a \emph{relation} from $X$ to $Y$ is a subset of $X \times Y$. There is a category, denoted $\rel$, for which the objects are sets and the morphisms are relations. There is a natural functor from $\Span$ to $\rel$, where a span $(A,f,g)$ is sent to the relation $\{(f(a),g(a)) \suchthat a \in A\}$.

The functor from $\Span$ to $\rel$ is only a part of a more sophisticated relationship that was discovered by Li-Bland and Weinstein \cite{li-bland-weinstein} as part of an effort to better understand the symplectic category. We provide a very brief review here, referring to \cite{li-bland-weinstein} for details.

A \emph{selective category} is a category with a distinguished class of morphisms, called \emph{suave}, and a distinguished class of composable pairs of suave morphisms, called \emph{congenial}, satisfying certain axioms. The examples that are relevant for the present work are as follows:
\begin{itemize}
    \item $\rel$, where all of the morphisms are suave, and the congenial pairs are those that are monic. To be precise, if $R \subseteq X \times Y$ is a relation from $X$ to $Y$ and $S \subseteq Y \times Z$ is a relation from $Y$ to $Z$, then the pair $(S,R)$ is \emph{monic} if, for all $(x,z) \in X \times Z$, there exists at most one $y \in Y$ such that $(x,y) \in R$ and $(y,z) \in S$.
    \item $\srel$ is the category where the objects are symplectic manifolds, and where the morphisms are all set-theoretic relations. The suave morphisms are the Lagrangian relations, and the congenial pairs are those that are strongly transversal \cite{weinstein:ww}.
\end{itemize}

Li-Bland and Weinstein defined a construction that takes a selective category $\cat$ and constructs a new category $\WW(\cat)$. The objects of $\WW(\cat)$ are the same as those of $\cat$. If $X$ and $Y$ are objects, a morphism from $X$ to $Y$ is a finite sequence of suave morphisms
\[ X = X_0 \tolabel{f_1} X_1 \tolabel{f_2} \cdots \tolabel{f_n} X_n = Y,\]
modulo the following equivalence relation. If we write such a sequence as a formal product $f_n\fprod \cdots \fprod f_1$, then the equivalence relation is generated by relations of the form $f_{i+1} \fprod f_i = f_{i+1} f_i$ for congenial pairs $(f_{i+1}, f_i)$.

In the case where $\cat = \srel$, this construction reproduces the Wehrheim-Woodward category \cite{ww}, which is a rigorous construction of the symplectic category $\Symp$. The following results appear in \cite{li-bland-weinstein}:
\begin{itemize}
    \item There is a canonical isomorphism between $\WW(\rel)$ and $\Span$. We review this isomorphism below in Section \ref{sec:iso}.
    \item The forgetful functor $\srel \to \rel$ is compatible with the selective structures, i.e.\ it sends suave morphisms to suave morphisms and congenial pairs to congenial pairs. Therefore, there is an induced functor $\Symp = \WW(\srel) \to \WW(\rel) = \Span$.
\end{itemize}
If $\cat$ is a selective category, then there is a \emph{composition functor} $\WW(\cat) \to \cat$, taking $f_n \fprod \cdots \fprod f_1$ to $f_n \cdots f_1$. Via the isomorphism $\WW(\rel) \isoto \Span$, this recovers the functor $\Span \to \rel$ described at the beginning of this section.

The above discussion can be summarized by the following sequence of functors:
\[ \Symp \to \Span \to \rel.\]
This relationship explains our assertion that $\Span$ is a good set-theoretic model for $\Symp$, incorporating aspects of the Wehrheim-Woodward category that don't appear in $\rel$.

\subsection{\texorpdfstring{$\WW(\rel) = \Span$}{WW(Rel)=Span}}\label{sec:iso}
In this section, we will describe an isomorphism between $\WW(\rel)$ and $\Span$, using an approach that is slightly more direct than the one in \cite{li-bland-weinstein}. 

Recall that, if $X$ and $Y$ are sets, then a relation from $X$ to $Y$ is a subset $R \subseteq X \times Y$. When viewing such a subset as a morphism in $\rel$, we will denote it as $X \reltolabel{R} Y$. 

The objects of $\WW(\rel)$ are sets, and a morphism from $X$ to $Y$ is represented by a finite sequence of relations
\begin{equation}\label{eqn:wwrel}
    X = X_0 \reltolabel{R_1} X_1 \reltolabel{R_2} \cdots \reltolabel{R_n} X_n = Y,
\end{equation}
viewed as a formal product $R_n \fprod \cdots \fprod R_1$, modulo the relation $R_{i+1} \fprod R_i = R_{i+1} R_i$ when the pair $(R_{i+1}, R_i)$ is monic.

If $f:X \to Y$ is a map of sets, then the graph of $f$ is a relation from $X$ to $Y$. If $g:Y \to X$ is a map of sets, then the graph of $g$ can also be viewed as a relation from $X$ to $Y$. In these cases, we will denote the corresponding relations by $X \tolabel{f} Y$ and $X \fromlabel{g} Y$. If $(A, f_1, f_2)$ is a span from $X$ to $Y$, then $X \fromlabel{f_1} A \tolabel{f_2} Y$ represents a morphism in $\WW(\rel)$ from $X$ to $Y$. 

\begin{prop}\label{prop:functor}
The map $(A, f_1, f_2) \mapsto (X \fromlabel{f_1} A \tolabel{f_2} Y)$ defines a functor $F: \Span \to \WW(\rel)$.
\end{prop}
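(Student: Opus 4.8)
The plan is to check the three properties defining a functor: that $F$ is the identity on objects (immediate, since both categories have sets as objects), that it is well defined on morphisms and sends identities to identities, and that it preserves composition. Everything rests on one observation about $\WW(\rel)$: if $(S,R)$ is a composable pair of relations in which each factor is the graph or the reversed graph of a function, arranged so that the mediating element over any pair $(x,z)$ is forced by one of the two functions, then $(S,R)$ is monic, and hence $S \fprod R = SR$ in $\WW(\rel)$; in particular a composite of two function graphs read in the consistent direction is always a monic pair. For the identities, $F$ sends $(X,\id,\id)$ to the sequence $X \fromlabel{1} X \tolabel{1} X$, whose two relations are both the diagonal $\Delta_X$; this pair is (trivially) monic, so the sequence collapses to the length-one sequence $\Delta_X$, which represents $\id_X$ in $\WW(\rel)$.

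For well-definedness, suppose $\phi \colon A \to A'$ is a bijection with $f_1' \phi = f_1$ and $f_2' \phi = f_2$, as in \eqref{diag:isospan}. I would split $\phi$ off from both ends of $F(A,f_1,f_2) = (A \tolabel{f_2} Y) \fprod (X \fromlabel{f_1} A)$. Since $f_2 = f_2' \phi$, the relation $A \tolabel{f_2} Y$ is the composite of the monic pair $\bigl(A' \tolabel{f_2'} Y,\ A \tolabel{\phi} A'\bigr)$, so $A \tolabel{f_2} Y = (A' \tolabel{f_2'} Y) \fprod (A \tolabel{\phi} A')$ in $\WW(\rel)$. On the other end, the pair $\bigl(A \tolabel{\phi} A',\ X \fromlabel{f_1} A\bigr)$ is monic (the mediating element over $(x,a')$ can only be $\phi^{-1}(a')$), with composite $X \fromlabel{f_1'} A'$, using $f_1 = f_1' \phi$. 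Substituting both rewrites gives $F(A,f_1,f_2) = (A' \tolabel{f_2'} Y) \fprod (X \fromlabel{f_1'} A') = F(A',f_1',f_2')$, so $F$ descends to isomorphism classes of spans.

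For composition, let $(A,f_1,f_2) \colon X \spanto Y$ and $(B,g_1,g_2) \colon Y \spanto Z$ be spans, and represent their composite by the pullback span $\bigl(A \bitimes{f_2}{g_1} B,\ f_1 p_1,\ g_2 p_2\bigr)$ of Figure \ref{fig:composition} (any choice of pullback gives the same morphism of $\Span$, by the previous paragraph). Composition in $\WW(\rel)$ is concatenation of sequences, so $F(B,g_1,g_2) \circ F(A,f_1,f_2)$ is represented by $X \fromlabel{f_1} A \tolabel{f_2} Y \fromlabel{g_1} B \tolabel{g_2} Z$. The inner pair $\bigl(Y \fromlabel{g_1} B,\ A \tolabel{f_2} Y\bigr)$ is monic, since the only mediating element over $(a,b)$ is $f_2(a)$; collapsing it replaces the two middle relations by the single relation $R = \{(a,b) : f_2(a) = g_1(b)\} \subseteq A \times B$. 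Next, $R$ is exactly the composite of the monic pair $\bigl(A \bitimes{f_2}{g_1} B \tolabel{p_2} B,\ A \fromlabel{p_1} A \bitimes{f_2}{g_1} B\bigr)$ — the only mediating element over $(a,b)$ being $(a,b)$ itself — so $R$ may be replaced by that length-two sequence. The sequence has become $X \fromlabel{f_1} A \fromlabel{p_1} A \bitimes{f_2}{g_1} B \tolabel{p_2} B \tolabel{g_2} Z$. Finally the pair on the $Z$-side is a composite of the function graphs of $p_2$ then $g_2$, hence monic with composite $A \bitimes{f_2}{g_1} B \tolabel{g_2 p_2} Z$, and the pair on the $X$-side is a composite of the reversed graphs of $p_1$ then $f_1$, hence monic with composite $X \fromlabel{f_1 p_1} A \bitimes{f_2}{g_1} B$. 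What is left is $X \fromlabel{f_1 p_1} A \bitimes{f_2}{g_1} B \tolabel{g_2 p_2} Z$, which is $F$ of the composite span, proving $F(g \circ f) = F(g) \circ F(f)$.

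The main obstacle is bookkeeping rather than anything conceptual: one must keep the source and target of every relation in the sequence straight and, at each collapse, confirm that the relevant composable pair is genuinely monic. Once the monic-pair observation of the first paragraph is isolated, every collapse is a one-line check, and no subtlety specific to general relations (as opposed to function graphs) ever enters.
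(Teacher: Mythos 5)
Your proof is correct and takes essentially the same approach as the paper: identities, well-definedness, and composition are each handled by identifying the relevant monic pairs of function graphs (or reversed graphs) and collapsing them, with the composite span's pullback and the relation $R=\{(a,b)\suchthat f_2(a)=g_1(b)\}$ serving as the common intermediary. You merely spell out a few collapses (e.g.\ $f_1p_1$ and $g_2p_2$) that the paper leaves implicit.
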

\begin{proof}
First, we observe that $F$ is well-defined at the level of morphisms, i.e.\ that isomorphic spans are mapped to equivalent sequences of relations. Specifically, given an isomorphism of spans as in \eqref{diag:isospan}, we see that ${X \fromlabel{f_1'} A' \tolabel{f_2'} Y}$ is equivalent to $X \fromlabel{f_1} A \tolabel{f_2} Y$ since $f_1' \phi = f_1$ and $f_2' \phi = f_2$ are monic compositions.

Next, we consider a composition of spans as in Figure \ref{fig:composition}. We observe that $A \tolabel{f_2} Y \fromlabel{g_1} B$ and $A \fromlabel{p_1} A \bitimes{f_2}{g_1} B \tolabel{p_2} B$ are both monic compositions that give rise to the same relation $A \reltolabel{R} B$, where $R = \{(a,b) \suchthat f_2(a) = g_1(b)\}$. Thus, $X \fromlabel{f_1 p_1} A \bitimes{f_2}{g_1} B \tolabel{g_2 p_2} Z$ is equivalent to $X \fromlabel{f_1} A \tolabel{f_2} Y \fromlabel{g_1} B \tolabel{g_2} Z$, which proves that $F$ preserves compositions.

Finally, we observe that the identity span on $X$ is sent to $X \fromlabel{1} X \tolabel{1} X$, which is a monic composition that gives rise to the identity relation $X \reltolabel{1} X$.
\end{proof}

\begin{prop}
The functor $F$ in Proposition \ref{prop:functor} is an isomorphism of categories.
\end{prop}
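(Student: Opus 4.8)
The plan is to produce an explicit inverse functor $G\colon \WW(\rel) \to \Span$, rather than checking bijectivity on hom-sets directly. On objects $G$ is the identity, matching $F$. On morphisms, the key observation is that every relation $R \subseteq X\times Y$ is itself a span $(R,\pi_1,\pi_2)$ from $X$ to $Y$, where $\pi_1,\pi_2$ are the restrictions of the coordinate projections; I would then set $G$ of a formal product $R_n \fprod \cdots \fprod R_1$ to be the composite $(R_n,\pi_1,\pi_2) \circ \cdots \circ (R_1,\pi_1,\pi_2)$ taken in $\Span$, with the identity morphism on $X$ sent to the identity span.

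The first step is to check that $G$ is well defined, i.e.\ that it respects the equivalence relation on formal products. Since composition in $\Span$ is associative, this reduces to showing that $(S,\pi_1,\pi_2)\circ (R,\pi_1,\pi_2) = (SR,\pi_1,\pi_2)$ whenever $(S,R)$ is a monic composable pair of relations. Here one computes that the left-hand composite is represented by the pullback $\{(x,y,z) \suchthat (x,y)\in R,\ (y,z)\in S\}$, with legs $(x,y,z)\mapsto x$ and $(x,y,z)\mapsto z$, and that the map $(x,y,z)\mapsto (x,z)$ onto $SR$ is always a surjection of spans which is injective exactly when the monic condition holds — hence an isomorphism of spans, as needed. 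I expect this to be the main point requiring care: it is precisely where the notion of congeniality in $\rel$ gets matched with pullbacks in $\Span$. Granting this, functoriality of $G$ is immediate, since $G$ turns concatenation of sequences into composition of the corresponding composite spans and preserves identities by construction.

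It then remains to check $G\circ F = \id_{\Span}$ and $F\circ G = \id_{\WW(\rel)}$. For the first, I would unwind $F(A,f_1,f_2) = (X \fromlabel{f_1} A \tolabel{f_2} Y)$, whose two underlying relations are the graphs of $f_1$ and $f_2$; the associated spans are isomorphic to $(A,f_1,\id_A)$ and $(A,\id_A,f_2)$, and their composite in $\Span$ is $(A,f_1,f_2)$. For the second, I would first treat a single relation $R$: here $F(G(R)) = F\big((R,\pi_1,\pi_2)\big) = (X\fromlabel{\pi_1}R\tolabel{\pi_2}Y)$, which is a monic composition collapsing to $R$, hence equals $R$ as a morphism in $\WW(\rel)$. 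The general case then follows formally, since $F$ (by Proposition \ref{prop:functor}) and $G$ both preserve composition and composition in $\WW(\rel)$ is concatenation of sequences. This shows that $F$ is an isomorphism of categories.
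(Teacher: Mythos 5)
Your proof is correct and follows essentially the same route as the paper: your inverse functor $G$ (iterated pullback of the spans $(R_i,\pi_1,\pi_2)$) is exactly the paper's trajectory construction, and your key lemma that a monic pair $(S,R)$ satisfies $(S,\pi_1,\pi_2)\circ(R,\pi_1,\pi_2)\cong(SR,\pi_1,\pi_2)$ is the same contraction argument the paper uses for well-definedness. The only difference is packaging — you exhibit a two-sided inverse, while the paper checks that $F$ is bijective on objects, full (via the factorization $X\gets R\to Y$), and faithful (via the left inverse).
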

\begin{proof}
Since $F$ is the identity on objects, it suffices to prove that $F$ is full and faithful.

Any relation $X \reltolabel{R} Y$ can be factored as a monic composition $X \gets R \to Y$ and is thus in the image of $F$. Since the morphisms in $\WW(\rel)$ are generated by relations, it follows that $F$ is full.

Given a sequence of relations as in \eqref{eqn:wwrel}, we can obtain a span as follows. Let $A = A(R_1, \dots, R_n)$ be the set consisting of all \emph{trajectories} from $X$ to $Y$, i.e.\ sequences $(x_0,\dots, x_n)$, $x_i \in X_0$, such that $(x_{i-1}, x_i) \in R_i$ for $i = 1, \dots, n$. Projection onto the first and last components give a span $X \gets A \to Y$. 

If $(R_{i+1}, R_i)$ is a monic pair, then the contraction $(x_0, \dots, x_n) \mapsto (x_0, \dots \hat{x}_i, \dots, x_n)$ gives an isomorphism of spans from $A(R_1, \dots, R_n)$ to $A(R_1, \dots, R_{i+1}R_i, \dots R_n)$. Thus the map $R_n \fprod \cdots \fprod R_1 \mapsto A(R_1, \dots R_n)$ gives a well-defined map from morphisms in $\WW(\rel)$ to morphisms in $\Span$. This map is a left inverse to $F$, demonstrating that $F$ is faithful.
\end{proof}

As an aside, we note that both $\WW(\rel)$ and $\Span$ can be viewed as truncations of $2$-categories, so it seems reasonable to expect that this isomorphism can be upgraded to an equivalence of $2$-categories.

\section{Monoids in \texorpdfstring{$\Span$}{Span} and simplicial sets}\label{sec:monoids}

If $\cat$ is a monoidal category, then one can define the notion of a \emph{monoid} in $\cat$. In this section, we study monoids in $\Span$. We will see that there is a correspondence, up to isomorphism, between monoids in $\Span$ and simplicial sets satisfying certain properties.

\subsection{Monoids in \texorpdfstring{$\Span$}{Span}}

A monoid in $\Span$ consists of a set $X$, equipped with a morphism $\eta: \bullet \spanto X$ (\emph{unit}) and a morphism $\mu: X \times X \spanto X$ (\emph{multiplication}), satisfying
\begin{enumerate}
    \item (unit axiom) $\mu \circ (\idx \times \eta) = \mu \circ (\eta \times \idx) = \idx$,
    \item (associativity) $\mu \circ (\idx \times \mu) = \mu \circ (\mu \times \idx)$.
\end{enumerate}
Here, $\bullet$ denotes a set containing one element, and $\idx$ denotes the identity morphism on $X$.

It is frequently convenient to use string diagrams to depict morphisms that are constructed out of the unit, multiplication, and identity via composition and monoidal product. Specifically, $\eta$, $\mu$, and $\idx$ are respectively depicted as follows, where the diagrams should be read from top to bottom:
\stringdiagram{
\unit{-3}{0}
\multiplication{0}{0}
\identity{3}{0}
}
The unit and associativity axioms can then be depicted as follows:
\stringdiagram{
\begin{scope} 
\unit{-2}{1}
\identity{-4}{1}
\multiplication{-3}{-1}
\equals{-1}{0}
\unit{0}{1}
\identity{2}{1}
\multiplication{1}{-1}
\equals{3}{0}
\identity{4}{0}
\end{scope}

\begin{scope}[shift={(8,0)}] 
\identity{0}{1}
\multiplication{2}{1}
\multiplication{1}{-1}
\equals{3.5}{0}
\multiplication{5}{1}
\identity{7}{1}
\multiplication{6}{-1}
\end{scope}
}

\subsection{Simplicial sets}\label{sec:simplicial}

To establish notation and terminology, we briefly review some definitions relating to simplicial sets.

\begin{definition} \label{dfn:simplicial}A \emph{simplicial set} $\mathcal{X}$ is a sequence $X_0, X_1, \dots$  of sets equipped with maps $d_i^q: X_q \to X_{q-1}$ (called \emph{face maps}), $0 \leq i \leq q$, and $s_i^q : X_q \to X_{q+1}$ (called \emph{degeneracy maps}), $0 \leq i \leq q$,  such that
\begin{align}
d_i^{q-1}d_j^q &= d_{j-1}^{q-1}d_i^q, \qquad i < j,\label{eqn:twoface}\\
s_i^{q+1}s_j^q &= s_{j+1}^{q+1}s_i^q, \qquad i \leq j, \label{eqn:twodegen}\\
d_i^{q+1}s_j^q &= \begin{cases}
s_{j-1}^{q-1}d_i^q, & i< j,\\
\id, & i = j \mbox{ or }j+1, \\
s_j^{q-1}d_{i-1}^q, & i > j+1. \end{cases}\label{eqn:facedegen}
\end{align}
\end{definition}

We will also need to consider $n$-truncated versions of simplicial sets, which only include data going up to $X_n$:
\begin{definition}\label{dfn:nsimplicial}
An \emph{$n$-truncated simplicial set} $\mathcal{X}$ is a sequence $X_0, X_1, \dots X_n$ of sets equipped with face maps $d_i^q: X_q \to X_{q-1}$, $0 \leq i \leq q \leq n$, and degeneracy maps $s_i^q : X_q \to X_{q+1}$, $0 \leq i \leq q < n$, satisfying \eqref{eqn:twoface}--\eqref{eqn:facedegen} whenever both sides of an equation are defined.
\end{definition}

Suppose that $\mathcal{X}$ is a (possibly $n$-truncated) simplicial set. For $1 \leq q \leq n+1$, let $\Delta_q \mathcal{X}$ denote the set of $(q+1)$-tuples $(\zeta_0, \dots \zeta_q)$, $\zeta_i \in X_{q-1}$, such that
\begin{equation}\label{eqn:horncompat}
     d_i^{q-1} \zeta_j = d_{j-1}^{q-1} \zeta_i
\end{equation}
for $i < j$. There is a natural \emph{boundary map} $\delta^q: X_q \to \Delta_q \mathcal{X}$, given by
\[ \delta^q(w) = (d_0^q w, \dots, d_q^q w).\]
\begin{definition}
A simplicial set $\mathcal{X}$ is called \emph{$n$-coskeletal} if $\delta^q$ is a bijection for $q>n$.
\end{definition}
It is well-known (see, for example, \cite{artin-mazur}) that any $n$-truncated simplicial set has a unique extension to an $n$-coskeletal simplicial set. The extension can be recursively constructed by taking $X_{n+1} = \Delta_{n+1} \mathcal{X}$.

\subsection{From simplicial sets to monoids}

Let $X_\bullet$ be a $2$-truncated simplicial set. Without any further assumptions, we can construct the spans
\begin{equation}\label{diag:simp2monoid}
\begin{tikzcd}
& X_0 \arrow{dl} \arrow{dr}{s_0^0} & \\
\bullet && X_1
\end{tikzcd} 
\;\;\;\;\;
\begin{tikzcd}
& X_2 \arrow[swap]{dl}{(d_2^2,d_0^2)} \arrow{dr}{d_1^2} & \\
X_1 \times X_1 && X_1
\end{tikzcd}
\end{equation}
which respectively represent morphisms $\eta: \bullet \spanto X_1$ and $\mu: X_1 \times X_1 \spanto X_1$ in $\Span$. We can then ask whether $(X_1, \eta, \mu)$ satisfies the axioms of a monoid in $\Span$. The following lemmas establish necessary and sufficient conditions.

\begin{lemma}\label{lemma:simpunit}
The unit axiom holds if and only if the following conditions hold for all $\zeta \in X_2$:
\begin{enumerate}
    \item If $d_2^2 \zeta \in \im(s_0^0)$, then $\zeta \in \im(s_0^1)$.
    \item If $d_0^2 \zeta \in \im(s_0^0)$, then $\zeta \in \im(s_1^1)$.
\end{enumerate}
\end{lemma}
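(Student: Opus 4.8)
The plan is to translate the unit axiom --- the two equations $\mu \circ (\eta \times \idx) = \idx$ and $\mu \circ (\idx \times \eta) = \idx$ in $\Span$ --- into concrete conditions on the $2$-truncated simplicial set $X_\bullet$ by computing each composite as an explicit span from $X_1$ to $X_1$ and determining when that span is isomorphic to the identity span. The two equations are symmetric and will yield conditions (1) and (2), respectively.

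First I would handle $\mu \circ (\eta \times \idx)$. Identifying $\bullet \times X_1$ with $X_1$, the morphism $\eta \times \idx$ is represented by the span with apex $X_0 \times X_1$, left leg the second projection, and right leg $s_0^0 \times \id$. Taking the pullback against the left leg $(d_2^2, d_0^2)$ of the span representing $\mu$ and discarding the redundant $X_1$-coordinate, one finds that $\mu \circ (\eta \times \idx)$ is represented by the span with apex $A = \{(a,\zeta) \in X_0 \times X_2 \mid s_0^0 a = d_2^2 \zeta\}$, left leg $(a,\zeta) \mapsto d_0^2 \zeta$, and right leg $(a,\zeta) \mapsto d_1^2 \zeta$. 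I would then invoke the elementary fact that a span from $X_1$ to $X_1$ is isomorphic to the identity span if and only if there is a bijection $X_1 \to A$ that is a common section of both legs; moreover such a bijection, if it exists, must be the common inverse of the two legs and is therefore unique. The simplicial identities $d_2^2 s_0^1 = s_0^0 d_1^1$, $d_0^2 s_0^1 = \id$, and $d_1^2 s_0^1 = \id$ say exactly that $\psi \colon c \mapsto (d_1^1 c, s_0^1 c)$ is a well-defined common section $X_1 \to A$; it is visibly injective. Hence $\mu \circ (\eta \times \idx) = \idx$ if and only if $\psi$ is surjective. Finally, since $s_0^0$ is injective (from $d_0^1 s_0^0 = \id$), the first coordinate of a pair $(a,\zeta) \in A$ is determined by $\zeta$, and $\im(s_0^1) \subseteq \{\zeta \mid d_2^2 \zeta \in \im(s_0^0)\}$ always holds (again by $d_2^2 s_0^1 = s_0^0 d_1^1$); so surjectivity of $\psi$ is equivalent to the reverse inclusion, i.e., to condition (1).

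The equation $\mu \circ (\idx \times \eta) = \idx$ is entirely parallel: this composite is represented by the span with apex $\{(a,\zeta) \in X_0 \times X_2 \mid s_0^0 a = d_0^2 \zeta\}$, left leg $(a,\zeta) \mapsto d_2^2 \zeta$, right leg $(a,\zeta) \mapsto d_1^2 \zeta$, and the relevant common section is $c \mapsto (d_0^1 c, s_1^1 c)$, whose well-definedness and section properties come from the identities $d_0^2 s_1^1 = s_0^0 d_0^1$, $d_2^2 s_1^1 = \id$, and $d_1^2 s_1^1 = \id$; the same argument yields condition (2). I expect the only real difficulty to be bookkeeping --- carefully carrying out the two pullbacks (keeping track of the identification $\bullet \times X_1 \cong X_1$ and of which leg is which) and matching up exactly the right simplicial identities --- rather than anything conceptual.
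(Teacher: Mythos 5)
Your proof is correct and follows essentially the same route as the paper: compute $\mu\circ(\eta\times\idx)$ and $\mu\circ(\idx\times\eta)$ as explicit spans with apex $\{\zeta : d_2^2\zeta \in \im(s_0^0)\}$ (resp.\ $d_0^2$), observe that the degeneracies $s_0^1$, $s_1^1$ provide a canonical injective common section forced by uniqueness, and reduce the identity-span condition to surjectivity of that section, which is exactly conditions (1) and (2). The only cosmetic difference is that the paper packages the same argument as the map $\phi\colon A \to X_1$, $\zeta \mapsto d_0^2\zeta = d_1^2\zeta$, going the other way.
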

\begin{proof}
Consider the composition $\mu \circ (\eta \times \idx)$:
\[
\begin{tikzcd}
&&[-20pt] (X_0 \times X_1) * X_2 \arrow{dl} \arrow{dr} &[-15pt]& \\
& X_0 \times X_1 \arrow[swap]{dl}{p_2} \arrow{dr}{s_0^0 \times 1} & & X_2 \arrow[swap]{dl}{(d_2^2,d_0^2)} \arrow{dr}{d_1^2} &\\
X_1 && X_1 \times X_1 && X_1
\end{tikzcd}
\]
Here, $p_2$ is projection onto the second component. 

The pullback in the above diagram is
\[ (X_0 \times X_1) * X_2 = \{(u,x,\zeta) \in X_0 \times X_1 \times X_2 \suchthat s_0^0 u = d_2^2 \zeta, \; x = d_0^2 \zeta \},\]
which, since $s_0^0$ is injective, can be identified with 
\[ A := \{\zeta \in X_2 \suchthat d_2^2 \zeta \in \im(s_0^0)\} \subseteq X_2.\]
The equation $\mu \circ (\eta \times \idx) = \idx$ holds if and only if there is a bijection $\phi: A \to X_1$ such that $\phi(\zeta) = d_0^2 \zeta = d_1^2 \zeta$. This condition completely determines $\phi$, so the question is whether it is well-defined, i.e.\ if $d_0^2 \zeta = d_1^2 \zeta$, and if it is bijective.

For all $x \in X_1$, the degenerate $2$-simplex $s_0^1 x$ satisfies
\begin{align*}
d_2^2 s_0^1 x &= s_0^0 d_1^1 x, & d_0^2 s_0^1 x = d_1^2 s_0^1 x &= x.
\end{align*} 
Thus, $s_0^1 x$ is in $A$, and its image under $\phi$ is well-defined and equal to $x$. It follows that $\phi$ is well-defined and bijective if and only if $A = \{s_0^1 x \suchthat x \in X_1\}$, which is equivalent to condition (1) of the lemma.

The proof that the equation $\mu \circ (\idx \times \eta) = \idx$ is equivalent to condition (2) of the lemma is similar.
\end{proof}

To describe associativity in the language of simplicial sets, we will use the following notion. For $0 \leq i < j \leq 3$, the set $T_{ij} \mathcal{X}$ of \emph{$(ij)$-tacos} is defined as
\[ T_{ij}\mathcal{X} = \{(\zeta, \zeta') \in X_2 \times X_2 \suchthat d_{j-1}^2 \zeta = d_i^2 \zeta'\}.\]
Geometrically, an element of $T_{ij}\mathcal{X}$ is a pair of $2$-simplices that share an edge in a way that allows them to form the $i$th and $j$th faces of a $3$-simplex; see Figure \ref{fig:taco}. 

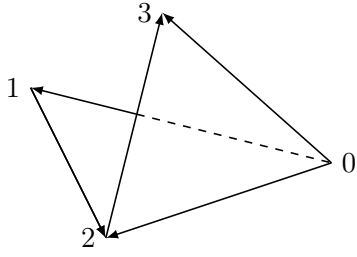
\begin{figure}[th]
\begin{tikzpicture}
\begin{scope}[semithick]
\coordinate [label=left:2] (b) at (0,0);
\coordinate [label=left:1] (c) at (-1,2);
\coordinate [label=left:3] (a) at (.75,3);
\coordinate [label=right:0] (d) at (3,1);
\coordinate (e) at (intersection of c--d and a--b);

\draw[<-] (a) -- (b);
\draw[<-] (a) -- (d);
\draw[<-] (b) -- (c);
\draw[<-] (b) -- (d);
\draw[<-] (b) -- (c);
\draw [<-] (c) -- (e);
\draw[dashed] (e) -- (d);
\end{scope}
\end{tikzpicture}
    \caption{A $(13)$-taco.}
    \label{fig:taco}
\end{figure}

The boundary of a taco consists of four $1$-simplices that satisfy certain compatibility conditions. In particular, the $(02)$-tacos and the $(13)$-tacos are complementary, so their boundaries have the same compatibility conditions. Specifically, let $S\mathcal{X}$ be the subset of $(X_1)^4$ consisting of all $(x_{01}, x_{12}, x_{23}, x_{03})$ such that
\begin{align*}
    d_0^1 x_{01} &= d_1^1 x_{12}, & d_0^1 x_{12} &= d_1^1 x_{23}, \\
    d_0^1 x_{23} &= d_0^1 x_{03}, & d_1^1 x_{03} &= d_1^1 x_{01}.
\end{align*}
The boundary maps $\boundary_{02}: T_{02}\mathcal{X} \to S\mathcal{X}$ and $\boundary_{13}: T_{13}\mathcal{X} \to S\mathcal{X}$ are defined by 
\begin{align*}
\boundary_{02}(\zeta, \zeta') &= (d_2^2 \zeta', d_2^2 \zeta, d_0^2 \zeta, d_1^2 \zeta'),\\
\boundary_{13}(\zeta, \zeta') &= (d_2^2 \zeta', d_0^2 \zeta', d_0^2 \zeta, d_1^2 \zeta).
\end{align*}

\begin{lemma}\label{lemma:simpassociativity}
The associativity axiom holds if and only if there exists a bijection $T_{02}\mathcal{X} \cong T_{13}\mathcal{X}$ that commutes with the boundary maps to $S\mathcal{X}$.
\end{lemma}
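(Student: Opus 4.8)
The plan is to put both sides of the associativity equation into a normal form as spans $X_1 \times X_1 \times X_1 \spanto X_1$, identify the two apices with the taco sets $T_{02}\mathcal{X}$ and $T_{13}\mathcal{X}$, recognize the combined legs of those spans as the boundary maps $\boundary_{02}$ and $\boundary_{13}$, and then quote the definition of isomorphism of spans.

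First I would build $\id \times \mu$, represented by the span with apex $X_1 \times X_2$, left leg $(x,\zeta) \mapsto (x, d_2^2\zeta, d_0^2\zeta)$, and right leg $(x,\zeta) \mapsto (x, d_1^2\zeta)$, and then compose with $\mu$ by pullback. The resulting apex is $\{(x,\zeta,\zeta') \in X_1 \times X_2 \times X_2 \suchthat x = d_2^2\zeta',\ d_1^2\zeta = d_0^2\zeta'\}$; since $x$ is determined by $\zeta'$, this is canonically bijective with $\{(\zeta,\zeta') \in X_2\times X_2 \suchthat d_1^2\zeta = d_0^2\zeta'\} = T_{02}\mathcal{X}$. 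Reading off the two legs of the composite span and assembling them into a single map to $X_1^3 \times X_1 \cong (X_1)^4$ (with $X_1^3$ matched to the coordinates $(x_{01},x_{12},x_{23})$ and the last factor $X_1$ to $x_{03}$), one gets exactly $(\zeta,\zeta') \mapsto (d_2^2\zeta',d_2^2\zeta,d_0^2\zeta,d_1^2\zeta')$, i.e.\ the map $\boundary_{02}$; in particular one checks using \eqref{eqn:twoface} that its image lies in $S\mathcal{X}$. An entirely parallel computation shows that $\mu \circ (\mu \times \id)$ is represented by a span whose apex is $\{(\zeta,\zeta') \in X_2\times X_2 \suchthat d_1^2\zeta = d_2^2\zeta'\}$, which is bijective with $T_{13}\mathcal{X}$ after swapping the two factors so as to match the convention in the definition, and whose combined leg is $\boundary_{13}$.

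With both morphisms in this form, the argument finishes quickly. By definition, isomorphic spans are related by a bijection of apices that commutes with both legs (see \eqref{diag:isospan}), so $\mu \circ (\id \times \mu) = \mu \circ (\mu \times \id)$ holds if and only if there is a bijection $\psi : T_{02}\mathcal{X} \to T_{13}\mathcal{X}$ with $\boundary_{13} \circ \psi = \boundary_{02}$ as maps to $X_1^3 \times X_1$. Since $\boundary_{02}$ and $\boundary_{13}$ both factor through $S\mathcal{X}$, this is the same as requiring $\psi$ to commute with the boundary maps to $S\mathcal{X}$, which is the statement of the lemma.

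The computation is routine, so the real obstacle is bookkeeping: keeping the face-map indices straight, identifying each iterated pullback with the correct taco set (including the factor-swap needed for $T_{13}\mathcal{X}$), matching the ordering of the four $X_1$-factors with the labels $x_{01},x_{12},x_{23},x_{03}$, and checking the four equations cutting out $S\mathcal{X}$ from the simplicial identities. Nothing here is deep, but it is easy to make an index error, so I would organize the whole computation around the geometric picture of a $3$-simplex, its $i$th and $j$th $2$-faces, and their shared edge, to keep the conventions aligned throughout.
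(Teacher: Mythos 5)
Your proposal is correct and follows essentially the same route as the paper's proof: compute both composite spans by pullback, identify the apices with $T_{02}\mathcal{X}$ and $T_{13}\mathcal{X}$ (with the factor swap for the latter), recognize the legs as $\boundary_{02}$ and $\boundary_{13}$, and invoke the definition of isomorphism of spans. The only cosmetic difference is that you package the two legs into a single map to $(X_1)^4$ and explicitly note that the images land in $S\mathcal{X}$, which the paper leaves implicit.
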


\begin{proof}
The composition $\mu \circ (\idx \times \mu)$ is as follows:
\begin{equation}\label{diag:t02}
\begin{tikzcd}
    &[-15pt] &[-15pt] (X_1 \times X_2) * X_2 \arrow{dl} \arrow{dr} &[-15pt]& \\
    & X_1 \times X_2 \arrow[swap]{dl}{1 \times (d_2^2, d_0^2)} \arrow{dr}{1 \times d_1^2} && X_2 \arrow[swap]{dl}{(d_2^2,d_0^2)} \arrow{dr}{d_1^2}& \\
    X_1 \times X_1 \times X_1 && X_1 \times X_1 && X_1
\end{tikzcd}
\end{equation}
The pullback in \eqref{diag:t02} is
\begin{align*} 
(X_1 \times X_2) * X_2 &= \{(x,\zeta,\zeta') \in X_1 \times X_2 \times X_2 \suchthat x = d_2^2 \zeta', \; d_1^2 \zeta = d_0^2 \zeta'\} \\
&\cong \{(\zeta,\zeta') \in X_2 \times X_2 \suchthat d_1^2 \zeta = d_0^2 \zeta'\}\\
&\cong T_{02}\mathcal{X}.
\end{align*}

On the other hand, the composition $\mu \circ (\mu \times \idx)$ is as follows:
\begin{equation}\label{diag:t13}
\begin{tikzcd}
    &[-15pt]&[-15pt] (X_2 \times X_1) * X_2 \arrow{dl} \arrow{dr} &[-15pt]& \\
    & X_2 \times X_1 \arrow[swap]{dl}{(d_2^2, d_0^2) \times 1} \arrow{dr}{d_1^2 \times 1} && X_2 \arrow[swap]{dl}{(d_2^2,d_0^2)} \arrow{dr}{d_1^2}& \\
    X_1 \times X_1 \times X_1 && X_1 \times X_1 && X_1
\end{tikzcd}
\end{equation}
The pullback in \eqref{diag:t13} is
\begin{align*} 
(X_2 \times X_1) * X_2 &= \{(\zeta'',x,\zeta''') \in X_2 \times X_1 \times X_2 \suchthat x = d_0^2 \zeta''', \; d_1^2 \zeta'' = d_2^2 \zeta'''\} \\
&\cong \{(\zeta'',\zeta''') \in X_2 \times X_2 \suchthat d_1^2 \zeta'' = d_2^2 \zeta'''\} \\
&\cong T_{13}\mathcal{X}.
\end{align*}
Note that the isomorphism with $T_{13}\mathcal{X}$ involves swapping the two components.

The associativity axiom holds if and only if the spans \eqref{diag:t02} and \eqref{diag:t13} are isomorphic, i.e.\ there exists a bijection $T_{02}\mathcal{X} \to T_{13}\mathcal{X}$, $(\zeta, \zeta') \mapsto (\zeta''', \zeta'')$, such that
\begin{align*}
d_2^2 \zeta' &= d_2^2 \zeta '', & d_2^2 \zeta =& d_0^2 \zeta'', & d_0^2 \zeta &= d_0^2 \zeta''', & d_1^2 \zeta' &= d_1^2 \zeta''',
\end{align*}
or, equivalently, $\boundary_{02}(\zeta,\zeta') = \boundary_{13}(\zeta''', \zeta'')$.
\end{proof}

Together, Lemmas \ref{lemma:simpunit} and \ref{lemma:simpassociativity} give us the following the result.

\begin{thm}\label{thm:simp2monoid}
Suppose $X_\bullet$ is a $2$-truncated simplicial set, and let $\eta: \bullet \spanto X_1$ and $\mu: X_1 \times X_1 \spanto X_1$ be given by the spans in \eqref{diag:simp2monoid}. Then $(X_1, \eta, \mu)$ is a monoid in $\Span$ if and only if the conditions in Lemmas \ref{lemma:simpunit} and \ref{lemma:simpassociativity} hold.
\end{thm}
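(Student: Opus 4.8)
The plan is to observe that this theorem is just the combination of the two preceding lemmas, once the definition of a monoid in $\Span$ is unpacked. By definition, $(X_1, \eta, \mu)$ is a monoid in $\Span$ precisely when the unit axiom $\mu \circ (\idx \times \eta) = \mu \circ (\eta \times \idx) = \idx$ and the associativity axiom $\mu \circ (\idx \times \mu) = \mu \circ (\mu \times \idx)$ both hold. The spans $\eta$ and $\mu$ entering these axioms are exactly the ones displayed in \eqref{diag:simp2monoid}, so no translation between the two settings is required before applying the lemmas.

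For the first half, I would invoke Lemma \ref{lemma:simpunit}: the compound equation defining the unit axiom is the conjunction of its two constituent equations $\mu \circ (\eta \times \idx) = \idx$ and $\mu \circ (\idx \times \eta) = \idx$, and the lemma shows these are equivalent to conditions (1) and (2) of that lemma, respectively. For the second half, Lemma \ref{lemma:simpassociativity} directly identifies the associativity axiom with the existence of a boundary-preserving bijection $T_{02}\mathcal{X} \cong T_{13}\mathcal{X}$. Chaining these two equivalences gives the asserted biconditional.

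I do not expect any genuine obstacle here: all the substantive content — the explicit pullback computations in \eqref{diag:simp2monoid}, \eqref{diag:t02}, and \eqref{diag:t13}, and the analysis of when the resulting spans are isomorphic to identity spans or to one another — has already been carried out in the two lemmas. The only point needing (minor) care is that the unit axiom is a conjunction of two separate equations and that these, together with associativity, are independent conditions on the same data, so that the combined list of conditions from Lemmas \ref{lemma:simpunit} and \ref{lemma:simpassociativity} captures exactly "monoid in $\Span$" with nothing extraneous and nothing omitted. Since this is immediate from the definition, the proof reduces to a one-line assembly of the two lemmas.
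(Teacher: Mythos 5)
Your proposal matches the paper exactly: the theorem is stated there as an immediate consequence of Lemmas \ref{lemma:simpunit} and \ref{lemma:simpassociativity}, with no further argument beyond unpacking the monoid axioms into the unit and associativity equations that those lemmas characterize. Your assembly of the two equivalences is precisely the intended proof.
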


\begin{example}\label{ex:cat}
Let $\cat$ be a small category. As is well-known, there is an associated simplicial set $N\cat_\bullet$, called the \emph{nerve} of $\cat$. The nerve of $\cat$ is $2$-coskeletal, with $N\cat_0 = \Ob(\cat)$, $N\cat_1 = \Mor(\cat)$, and
\[ N\cat_2 = \{(f,g) \in \Mor(\cat) \suchthat s(f) = t(g)\},\]
where $s,t : \Mor(\cat) \to \Ob(\cat)$ are the source and target maps. The face and degeneracy maps are as follows:
\begin{align*}
    d_0^1 &= s, & d_1^1 &= t, &s_0^0(x) &= 1_x,\\
    d_0^2(f,g) &= g, & d_1^2(f,g) &= fg, & d_2^2(f,g) &= f, \\
    s_0^1(f) &= (1_{t(f)}, f), & s_1^1(f) &= (f,1_{s(f)}).
\end{align*}
One can directly see that $N\cat_\bullet$ satisfies the condition of Lemma \ref{lemma:simpunit}. Furthermore, in this case, $T_{02}N\cat$ and $T_{13}N\cat$ can both be canonically identified with $N\cat_3$, giving the isomorphism for the condition in Lemma \ref{lemma:simpassociativity}.
\end{example}

\begin{example}\label{ex:twoelement}
Consider the case where $X_0 = \{e\}$ has one point, $X_1=\{a,b\} $ has two points, and $X_2$ is finite. Here we will describe all possible $2$-truncated simplicial sets of this form that satisfy the conditions of Theorem \ref{thm:simp2monoid}.

Without loss of generality, we can take $s_0^0(e) = a$. We can decompose $X_2$ into a disjoint union of $Y_{ijk}$ for $i,j,k \in X_1$, where $d_0(\zeta) = i$, $d_1(\zeta) = j$, and $d_2(\zeta) = k$ for $\zeta \in Y_{ijk}$. Then $X_2$ is determined up to isomorphism by the cardinalities $n_{ijk}$ of $Y_{ijk}$.

From the simplicial axioms, we have that $s_0^1(a) = s_1^1(a) \in Y_{aaa}$, $s_0^1(b) \in Y_{bba}$, and $s_1^1(b) \in Y_{abb}$. 

Condition (1) in Lemma \ref{lemma:simpunit} says that, if $d_2^2 \zeta = a$, then $\zeta = s_0^1(a)$ or $\zeta = s_0^1(b)$. Thus, $n_{aaa} = n_{bba} = 1$, and $n_{aba} = n_{baa} = 0$. Similarly, condition (2) in Lemma \ref{lemma:simpunit} says that, if $d_0^2 \zeta = a$, then $\zeta = s_1^1(a)$ or $\zeta = s_1^1(b)$. Thus, $n_{abb} = 1$, and $n_{aab} = 0$.

We haven't yet imposed the condition of Lemma \ref{lemma:simpassociativity}, but the cardinalities that remain unconstrained so far are $n_{bab}$ and $n_{bbb}$. It turns out, however, that the condition of Lemma \ref{lemma:simpassociativity} holds for all values of $n_{bab}$ and $n_{bbb}$. This can be shown by the long but straightforward process of considering each of the $16$ elements of $S\mathcal{X}$ and checking that the fibers of $\boundary_{02}$ and $\boundary_{13}$ have the same number of elements.

Thus, every choice of $n_{bab}$ and $n_{bbb}$ gives a monoid in $\Span$ on the $2$-element set $X_1 = \{a,b\}$. 
\end{example}

\subsection{From monoids to simplicial sets}

Suppose $(X, \eta, \mu)$ is a monoid in $\Span$, and let $E$ and $M$ be sets with maps $s_0^0: E \to X$ and $d_i^2: M \to X$, $i=0,1,2$, such that the spans
\begin{equation}\label{diag:monoidspan}
\begin{tikzcd}
    & E \arrow{dr}{s_0^0} \arrow{dl} & \\
    \bullet & & X
\end{tikzcd}
\;\;\;\;\;
\begin{tikzcd}
    & M \arrow{dr}{d_1^2} \arrow[swap]{dl}{(d_2^2,d_0^2)} & \\
    X \times X & & X
\end{tikzcd}
\end{equation}
represent $\eta$ and $\mu$, respectively. We will see that the maps $s_0^0$ and $d_i^2$ form part of the structure of a ($2$-truncated) simplicial set, where the set of $0$-simplices is $E$, the set of $1$-simplices is $X$, and the set of $2$-simplices is $M$.

The unit axiom can be expressed in terms of spans as follows. The composition $\mu \circ (\idx \times \eta)$ is represented by the diagram
    \[
    \begin{tikzcd}
    && (X \times E) * M \arrow{dl} \arrow{dr} && \\
    & X \times E \arrow[swap]{dl}{p_1} \arrow{dr}{1 \times s_0^0} && M \arrow[swap]{dl}{(d_2^2,d_0^2)} \arrow{dr}{d_1^2} & \\
    X && X \times X && X
    \end{tikzcd}
    \]
where $p_1: X \times E \to X$ is projection onto the first component, and where $(X \times E) * M$ is the pullback over $X \times X$. The equation $\mu \circ (\idx \times \eta) = \idx$ implies that there exists a bijection $X \isoto (X \times E) * M$ such that the diagram
    \begin{equation}\label{diag:unit}
    \begin{tikzcd}
    &[-10pt] & X \arrow[<->]{d} \arrow[bend left=30]{dddrr}{1} \arrow[bend right=30, swap]{dddll}{1} && \\
    && (X \times E) * M \arrow{dl} \arrow{dr} && \\
    & X \times E \arrow{dl}{p_1} \arrow{dr}{1 \times s_0^0} && M \arrow[swap]{dl}{(d_2^2,d_0^2)} \arrow[swap]{dr}{d_1^2} & \\
    X && X \times X && X
    \end{tikzcd}
    \end{equation}
commutes. Note that, because the outer maps in \eqref{diag:unit} are the identity, the bijection $X \isoto (X \times E) * M$ is unique.

The composition $X \isoto (X \times E) * M \to X \times E \to E$, where the last map is projection onto the second component, gives a map which we denote as $d_0^1: X \to E$. The composition $X \isoto (X \times E) * M \to M$ gives a map which we denote as $s_1^1: X \to M$. Using the maps we have just introduced, we obtain the commutative diagram
    \begin{equation}\label{diag:unit2}
    \begin{tikzcd}
    && X \arrow{dl}{(1, d_0^1)} \arrow[swap]{dr}{s_1^1} \arrow[bend left=30]{ddrr}{1} \arrow[bend right=30,swap]{ddll}{1}&& \\
    & X \times E \arrow{dr}{1 \times s_0^0} \arrow{dl}{p_1} && M \arrow[swap]{dl}{(d_2^2,d_0^2)} \arrow[swap]{dr}{d_1^2} &\\
   X && X \times X && X
    \end{tikzcd}
    \end{equation}
as an abbreviation of \eqref{diag:unit}.

Similarly, we can use the equation $\mu \circ (\eta \times \idx) = \idx$ to obtain maps $d_1^1: X \to E$ and $s_0^1: X \to M$, such that the diagram
    \begin{equation}\label{diag:unit3}
    \begin{tikzcd}
   & & X \arrow{dl}{(d_1^1,1)} \arrow[swap]{dr}{s_0^1} \arrow[bend left=30]{ddrr}{1} \arrow[swap,bend right=30]{ddll}{1} & &\\
     & E \times X \arrow{dr}{s_0^0 \times 1} \arrow{dl}{p_2} && M \arrow[swap]{dl}{(d_2^2,d_0^2)} \arrow[swap]{dr}{d_1^2} &\\
   X & & X \times X && X
    \end{tikzcd}
    \end{equation}
commutes.

We now turn to the associativity axiom, which can be expressed in terms of spans as follows. The diagram
    \begin{equation*}
    \begin{tikzcd}
    &[-15pt]& (X \times M) * M \arrow{dl} \arrow{dr} && \\
    & X \times M \arrow[swap]{dl}{1 \times (d_2^2,d_0^2)} \arrow{dr}{1 \times d_1^2} && M \arrow[swap]{dl}{(d_2^2,d_0^2)} \arrow{dr}{d_1^2} & \\
    X \times X \times X && X \times X && X
    \end{tikzcd}
    \end{equation*}
represents the composition $\mu \circ (\idx \times \mu)$. Using the identification
\begin{align*} 
(X \times M) * M &= \{(x,m,m') \in X \times M \times M \suchthat x = d_2^2 m', \; d_1^2 m = d_0^2 m'\} \\
&= \{(m,m') \suchthat d_1^2 m = d_0^2 m'\}\\
&= M \bitimes{d_1^2}{d_0^2} M,
\end{align*}
we obtain
    \begin{equation}\label{diag:ass1}
    \begin{tikzcd}
    &[-15pt]& M \bitimes{d_1^2}{d_0^2} M \arrow[swap]{dl}{(d_2^2 p_2, p_1)} \arrow{dr}{p_2} && \\
    & X \times M \arrow[swap]{dl}{1 \times (d_2^2,d_0^2)} \arrow{dr}{1 \times d_1^2} && M \arrow[swap]{dl}{(d_2^2,d_0^2)} \arrow{dr}{d_1^2} & \\
    X \times X \times X && X \times X && X
    \end{tikzcd}
    \end{equation}
as a representative of $\mu \circ (\idx \times \mu)$. Similarly, the diagram
    \begin{equation}\label{diag:ass2}
    \begin{tikzcd}
    &[-15pt]& M \bitimes{d_1^2}{d_2^2} M \arrow[swap]{dl}{(p_1, d_0^2 p_2)} \arrow{dr}{p_2} && \\
    & M \times X \arrow[swap]{dl}{(d_2^2,d_0^2) \times 1} \arrow{dr}{d_1^2 \times 1} && M \arrow[swap]{dl}{(d_2^2,d_0^2)} \arrow{dr}{d_1^2} & \\
    X \times X \times X && X \times X && X
    \end{tikzcd}
    \end{equation}
    represents the composition $\mu \circ (\mu \times \idx)$.
    
Associativity implies that there is a bijection $M \bitimes{d_1^2}{d_0^2} M \cong M \bitimes{d_1^2}{d_2^2} M$ giving an isomorphism of spans between \eqref{diag:ass1} and \eqref{diag:ass2}. It will be convenient to have a neutral model, so let $T$ be a set with maps $t_1: T \to X \times X \times X$ and $t_2: T \to X$ such that
\begin{equation}
    \begin{tikzcd}
        &[-15pt] T \arrow[swap]{dl}{t_1} \arrow{dr}{t_2}& \\
        X \times X \times X & & X
    \end{tikzcd}
\end{equation}
is isomorphic to \eqref{diag:ass1} and \eqref{diag:ass2}. We will use
\stringdiagram{
\tripleprod{0}{0}
}
to diagrammatically represent $T$.

\begin{thm}\label{thm:monoid2simp}
Suppose $(X, \eta, \mu)$ is a monoid in $\Span$, where $\eta$ and $\mu$ are represented by spans as in \eqref{diag:monoidspan}. Then the maps $d_i^k$ and $s_i^k$ in \eqref{diag:monoidspan}, \eqref{diag:unit2}, and \eqref{diag:unit3} are, respectively, the face and degeneracy maps for a $2$-truncated simplicial set $M \threearrows X \arrows E$.
\end{thm}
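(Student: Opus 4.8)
The plan is to verify directly that the collection of maps $d_i^k$, $s_i^k$ constructed from the monoid structure satisfies the simplicial identities \eqref{eqn:twoface}--\eqref{eqn:facedegen} that are defined at the $2$-truncated level. The sets are $E$ (0-simplices), $X$ (1-simplices), $M$ (2-simplices); the face maps $d_i^2 \colon M \to X$ come from the chosen span \eqref{diag:monoidspan} for $\mu$, the face maps $d_0^1, d_1^1 \colon X \to E$ and degeneracies $s_0^1, s_1^1 \colon X \to M$ come from the unit-axiom diagrams \eqref{diag:unit2} and \eqref{diag:unit3}, and $s_0^0 \colon E \to X$ comes from the span for $\eta$. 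The identities to check break into three groups: (i) the single $d$-$d$ identity $d_i^1 d_j^2 = d_{j-1}^1 d_i^2$ for $i<j$ (three cases: $(0,1),(0,2),(1,2)$); (ii) the $d$-$s$ identities, which at this truncation reduce to $d_i^2 s_j^1$ for the degeneracies into $M$ and $d_i^1 s_0^0$ for the degeneracy into $X$; (iii) the $s$-$s$ identity $s_i^1 s_j^0 = s_{j+1}^1 s_i^0$, which only involves $s_0^0$ and is a statement about maps $E \to M$.

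First I would read off from the commuting diagrams \eqref{diag:unit2} and \eqref{diag:unit3} the "easy" identities that are essentially by construction: commutativity of \eqref{diag:unit2} gives $d_2^2 s_1^1 = \id_X$, $d_0^2 s_1^1 = d_1^2 s_1^1 = \id_X$ (reading the legs into $X\times X$ and into $X$), and $s_0^0 d_0^1 = \id_X$-type relations; similarly \eqref{diag:unit3} gives $d_0^2 s_0^1 = \id_X$, $d_1^2 s_0^1 = d_2^2 s_0^1 = \id_X$. These immediately yield the $d$-$s$ identities of the form $d_i^2 s_j^1 = \id$ when $i=j$ or $i=j+1$, and the mixed ones like $d_2^2 s_0^1 = s_0^0 d_1^1$ and $d_0^2 s_1^1 = s_0^0 d_0^1$ after identifying the remaining leg. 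Next I would handle the $d$-$d$ identity; this is the one genuine constraint coming from associativity. Using the neutral model $T$ for the associator and the two representatives \eqref{diag:ass1}, \eqref{diag:ass2}, the isomorphism of spans forces the composite maps $M\times_{X} M \to X\times X\times X$ on both sides to agree, and chasing the three coordinates of the map to $X^3$ through the bijection yields exactly $d_i^1 d_j^2 = d_{j-1}^1 d_i^2$; more carefully, I expect that two of the three cases ($(0,2)$ in particular) follow from associativity while the $(0,1)$ and $(1,2)$ cases follow from the unit diagrams, since those involve a face that is "killed" by a degenerate simplex. I would also need the $s$-$s$ identity and the remaining $d$-$s$ identities involving $s_0^0$; these should follow by applying the uniqueness of the bijections in \eqref{diag:unit} (and its mirror), i.e. by exhibiting a candidate map and invoking the fact that the bijection $X \isoto (X\times E)*M$ is uniquely determined by the identity legs, so any two maps satisfying the defining property coincide.

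The main obstacle will be the $d$-$d$ identity together with disentangling which simplicial relations genuinely need associativity versus only the unit axiom. The subtlety is that $d_0^1$ and $d_1^1$ were defined only implicitly (as components of the unique bijection in \eqref{diag:unit}, \eqref{diag:unit3}), so to compute $d_i^1 d_j^2$ I cannot just substitute a formula; instead I have to feed a suitable element of $M$ into the defining universal property and recognize the output. The clean way to do this is: for $w \in M$, note that $s_1^1 d_2^2 w$ and $w$ both map to compatible things, and use the unit-composition identification to pin down $d_0^1 d_2^2 w$; iterating such arguments (and their mirror images for $d_1^1$) should give all the $d$-$d$ and $s$-$s$ relations without ever needing an explicit formula. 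I would organize the proof as: (1) extract the defining relations from \eqref{diag:unit2}, \eqref{diag:unit3}; (2) derive all $d$-$s$ and $s$-$s$ identities from these plus uniqueness; (3) derive the $d$-$d$ identity from associativity via the neutral model $T$; (4) remark that every equation among the simplicial identities \eqref{eqn:twoface}--\eqref{eqn:facedegen} that is defined at the $2$-truncated level has now been verified.
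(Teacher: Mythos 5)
Your setup is right --- the list of identities to check, and the observation that $d_2^2 s_1^1 = d_1^2 s_1^1 = \id$, $d_0^2 s_1^1 = s_0^0 d_0^1$, $d_0^2 s_0^1 = d_1^2 s_0^1 = \id$, $d_2^2 s_0^1 = s_0^0 d_1^1$ are immediate from the commutativity of \eqref{diag:unit2} and \eqref{diag:unit3} (your first pass, which asserts $d_0^2 s_1^1 = \id_X$ and $d_2^2 s_0^1=\id_X$, is a slip that you correct a sentence later). But the two hard groups of identities are not obtained the way you propose. For the face--face identities, the isomorphism of spans between \eqref{diag:ass1} and \eqref{diag:ass2} is a bijection $M \bitimes{d_1^2}{d_0^2} M \cong M \bitimes{d_1^2}{d_2^2} M$ intertwining legs built only from $d_0^2, d_1^2, d_2^2$; the maps $d_0^1, d_1^1$ do not appear anywhere in those diagrams, so ``chasing the three coordinates of the map to $X^3$'' cannot produce $d_i^1 d_j^2 = d_{j-1}^1 d_i^2$. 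The paper instead derives each of the three identities \eqref{eqn:faceface1}--\eqref{eqn:faceface3} from a separate equation of morphisms in $\Span$ obtained by inserting the unit $\eta$ into the associativity square in one of the three possible slots (equations \eqref{diag:midface}, \eqref{diag:leftface}, \eqref{diag:rightface}); so \emph{all three} face--face identities need associativity and the unit axiom together, not a split into ``associativity cases'' and ``unit cases'' as you guess. The engine that makes this work is the observation from Section \ref{sec:span} that compositions with identity morphisms and slide moves admit \emph{canonical} representatives, so the two sides of each such equation are represented by spans that are literally equal as diagrams of sets --- only then can one equate the individual structure maps (e.g.\ the two maps $M \to X \times E \times X$ in Figures \ref{fig:midface1} and \ref{fig:midface2}). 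Your fallback (``$s_1^1 d_2^2 w$ and $w$ both map to compatible things'') does not yet identify which equation in $\Span$ is being exploited, and without one the argument has nothing to run on.

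A similar gap affects $s_0^1 s_0^0 = s_1^1 s_0^0$ and $d_0^1 s_0^0 = d_1^1 s_0^0 = \id_E$. The bijections $\phi$ and $\phi'$ in \eqref{diag:unit} and its mirror are each unique, but they live over different pullbacks, and uniqueness of each one separately says nothing about how they interact. If you try to show that $(s_0^0 e,\, e,\, s_0^1 s_0^0 e)$ lies in $(X\times E)\ast M$ so as to invoke uniqueness of $\phi$, you need $d_2^2 s_0^1 s_0^0 e = s_0^0 e$, i.e.\ $s_0^0 d_1^1 s_0^0 = s_0^0$ --- which is (essentially) the identity you are trying to prove; the mirror attempt is circular in the same way. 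The paper breaks the circle with a third equation, $\mu\circ(\idx\times\eta)\circ\eta = \mu\circ(\eta\times\eta) = \mu\circ(\eta\times\idx)\circ\eta$, again computed via canonical representatives (Figures \ref{fig:doubleid1} and \ref{fig:doubleid2}), from which both desired identities drop out by comparing the resulting maps $E \to E\times E$ and $E \to M$. So the missing idea throughout is the same: each nontrivial simplicial identity comes from an auxiliary equation of morphisms in $\Span$, built from the monoid axioms, whose two sides can be given \emph{equal} (not just isomorphic) span representatives.
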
    

\begin{proof}
    
The identities 
\begin{align}
    d_0^2 \circ s_1^1 &= s_0^0 \circ d_0^1, & 
    d_1^2 \circ s_1^1 &= d_2^2 \circ s_1^1 = 1, \label{eqn:ds1} \\
    d_2^2 \circ s_0^1 &= s_0^0 \circ d_1^1, &
    d_0^2 \circ s_0^1 &= d_1^2 \circ s_0^1 = 1.
    \label{eqn:ds2}
\end{align}
follow from the commutativity of \eqref{diag:unit2} and \eqref{diag:unit3}.

Now, consider the equation $\mu \circ (\idx \times \eta) \circ \eta = \mu \circ (\eta \times \eta) = \mu \circ (\eta \times \idx) \circ \eta$, diagrammatically shown as follows:

\stringdiagramlabel{
\begin{scope}
\unit{0}{2.5}
\identity{0}{.5}
\unit{2}{.5}
\multiplication{1}{-1.5}
\equals{3}{0}
\unit{4}{1}
\unit{6}{1}
\multiplication{5}{-1}
\equals{7}{0}
\unit{10}{2.5}
\unit{8}{.5}
\identity{10}{.5}
\multiplication{9}{-1.5}
\draw[gray,dotted] (-.5,1.5) rectangle (.5,3.5);
\draw[gray,dotted] (9.5,1.5) rectangle (10.5,3.5);
\end{scope}
}{diag:doubleid1}
Because of the unit axiom, we can see that the left and right sides of \eqref{diag:doubleid1} are postcompositions of $\eta$ (boxed) with the identity morphism on $X$. Additionally, all three diagrams in \eqref{diag:doubleid1} are related via a slide move. Together, these relationships allow us to obtain natural representatives of the left and right sides of \eqref{diag:doubleid1} as compositions of spans, shown in Figures \ref{fig:doubleid1} and \ref{fig:doubleid2}, that are isomorphic via the identity map $1:E \to E$. Because they both give the same representative of the middle of \eqref{diag:doubleid1} via the slide move, the maps $E \to E \times E$ and $E \to M$ are equal, so by comparing these maps we obtain the identities
\begin{align} \label{eqn:e}
    d_0^1 s_0^0 &= d_1^1 s_0^0, & s_1^1 s_0^0 &= s_0^1 s_0^0. 
\end{align}

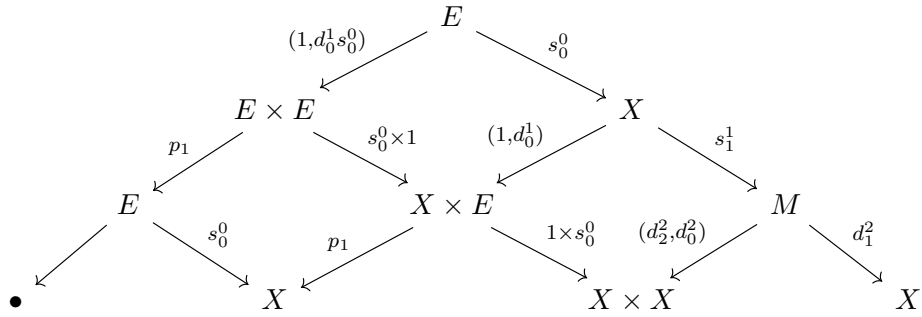
\begin{figure}[th]
\begin{tikzcd}
    &&& E \arrow[swap]{dl}{(1,d_0^1 s_0^0)} \arrow{dr}{s_0^0} &&& \\
    && E \times E \arrow[swap]{dl}{p_1} \arrow{dr}{s_0^0 \times 1} && X \arrow[swap]{dl}{(1,d_0^1)} \arrow{dr}{s_1^1} && \\
    & E \arrow{dl} \arrow{dr}{s_0^0} && X \times E \arrow[swap]{dl}{p_1} \arrow{dr}{1 \times s_0^0} && M \arrow[swap]{dl}{(d_2^2,d_0^2)} \arrow{dr}{d_1^2} & \\
    \bullet && X && X \times X && X
\end{tikzcd}
    \caption{Composition of spans representing the left side of \eqref{diag:doubleid1}.}
    \label{fig:doubleid1}
\end{figure}

\begin{figure}[th]
\begin{tikzcd}
    &&& E \arrow[swap]{dl}{(d_1^1 s_0^0,1)} \arrow{dr}{s_0^0} &&& \\
    && E \times E \arrow[swap]{dl}{p_2} \arrow{dr}{1 \times s_0^0} && X \arrow[swap]{dl}{(d_1^1,1)} \arrow{dr}{s_0^1} && \\
    & E \arrow{dl} \arrow{dr}{s_0^0} && E \times X \arrow[swap]{dl}{p_2} \arrow{dr}{s_0^0 \times 1} && M \arrow[swap]{dl}{(d_2^2,d_0^2)} \arrow{dr}{d_1^2} & \\
    \bullet && X && X \times X && X
\end{tikzcd}
    \caption{Composition of spans representing the right side of \eqref{diag:doubleid1}.}
    \label{fig:doubleid2}
\end{figure}

Next, we consider the equation
\stringdiagramlabel{
\identity{-4}{2}
\unit{-2}{2}
\identity{-1}{2}
\multiplication{-3}{0}
\identity{-1}{0}
\multiplication{-2}{-2}
\equals{0}{0}
\identity{1}{1}
\unit{2}{1}
\identity{3}{1}
\tripleprod{2}{-1}
\equals{4}{0}
\identity{8}{2}
\unit{6}{2}
\identity{5}{2}
\multiplication{7}{0}
\identity{5}{0}
\multiplication{6}{-2}
\draw[gray,dotted] (-3.5,-3) rectangle (-.5,-.5);
\draw[gray,dotted] (4.5,-3) rectangle (7.5,-.5);
}{diag:midface}
which follows from associativity. The left and right sides of \eqref{diag:midface} are precompositions of $\mu$ (boxed) with the identity morphism on $X \times X$. As a result, we obtain the representatives in Figures \ref{fig:midface1} and \ref{fig:midface2} that are isomorphic via the identity map $1: M \to M$. Because they both should give the same representative of the middle of \eqref{diag:midface}, the maps $M \to X \times E \times X$ are equal, so we obtain the identity
\begin{equation}\label{eqn:faceface1}
    d_0^1 d_2^2 = d_1^1 d_0^2.
\end{equation}

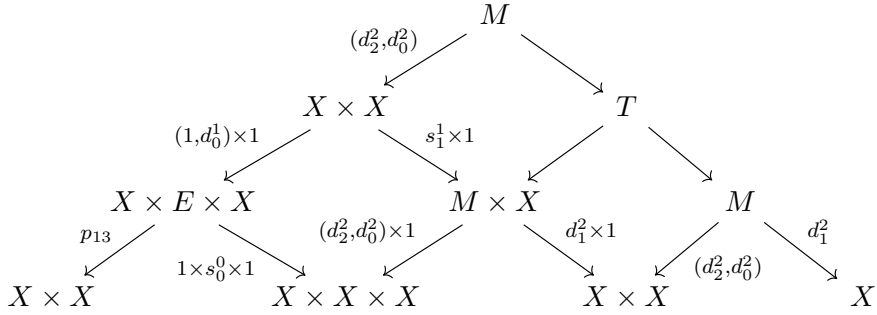
\begin{figure}[th]
    \begin{tikzcd}
    &[-30pt]&[-30pt]&[-25pt] M \arrow[swap]{dl}{(d_2^2,d_0^2)} \arrow{dr} &[-20pt]&[-15pt]&\\
    && X \times X \arrow[swap]{dl}{(1,d_0^1)\times 1} \arrow{dr}{s_1^1 \times 1} && T \arrow{dl} \arrow{dr} && \\
    & X \times E \times X \arrow[swap]{dl}{p_{13}} \arrow[swap]{dr}{1 \times s_0^0 \times 1} && M \times X \arrow[swap]{dl}{(d_2^2,d_0^2)\times 1} \arrow{dr}{d_1^2 \times 1} && M \arrow{dl}{(d_2^2,d_0^2)} \arrow{dr}{d_1^2} & \\
    X \times X &&X \times X \times X && X \times X && X
    \end{tikzcd}
    \caption{Composition of spans representing the left side of \eqref{diag:midface}.}
    \label{fig:midface1}
\end{figure}

\begin{figure}[th]
    \begin{tikzcd}
    &[-30pt]&[-30pt]&[-25pt] M \arrow[swap]{dl}{(d_2^2,d_0^2)} \arrow{dr} &[-20pt]&[-15pt]&\\
    && X \times X \arrow[swap]{dl}{1 \times (d_1^1,1)} \arrow{dr}{1 \times s_0^1} && T \arrow{dl} \arrow{dr} && \\
    & X \times E \times X \arrow[swap]{dl}{p_{13}} \arrow[swap]{dr}{1 \times s_0^0 \times 1} && X \times M \arrow[swap]{dl}{1 \times (d_2^2,d_0^2)} \arrow{dr}{1 \times d_1^2} && M \arrow{dl}{(d_2^2,d_0^2)} \arrow{dr}{d_1^2} & \\
    X \times X &&X \times X \times X && X \times X && X
    \end{tikzcd}
    \caption{Composition of spans representing the right side of \eqref{diag:midface}.}
    \label{fig:midface2}
\end{figure}

Next, we consider the equation
\stringdiagramlabel{
\unit{-4}{2}
\identity{-2}{2}
\identity{-1}{2}
\multiplication{-3}{0}
\identity{-1}{0}
\multiplication{-2}{-2}
\equals{0}{0}
\unit{1}{1}
\identity{2}{1}
\identity{3}{1}
\tripleprod{2}{-1}
\equals{4}{0}
\identity{8}{2}
\identity{6}{2}
\unit{5}{2}
\multiplication{7}{0}
\identity{5}{0}
\multiplication{6}{-2}
\draw[gray,dotted] (-3.5,-3) rectangle (-.5,-.5);
}{diag:leftface}
which follows from associativity. The left side of \eqref{diag:leftface} is the precomposition of $\mu$ (boxed) with the identity map on $X \times X$, so it is naturally represented by the composition in Figure \ref{fig:leftface1}. To obtain a natural representative of the right side of \eqref{diag:leftface}, we first consider
    \stringdiagramlabel{
    \multiplication{1}{3}
    \unit{-1}{1}
    \identity{1}{1}
    \multiplication{0}{-1}
    \draw[gray,dotted] (-.5,2) rectangle (2.5,4.5);
    }{diag:leftface2}
which is a postcomposition of $\mu$ (boxed) with the identity map on $X$, and is naturally represented by the composition in Figure \ref{fig:leftface2}. We note that the map $M \to E \times M$ in Figure \ref{fig:leftface2} is determined by commutativity of the diagram.
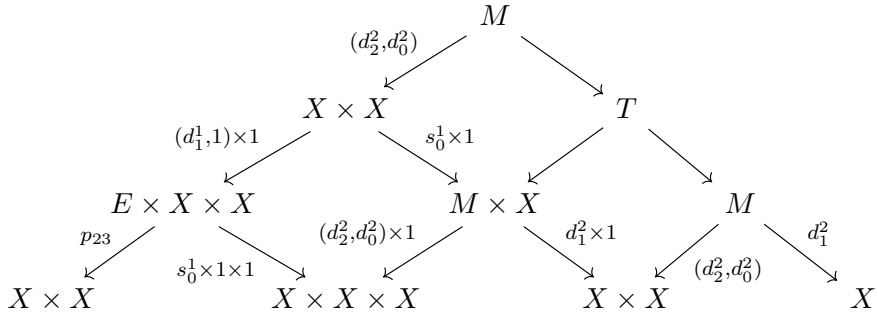
\begin{figure}[th]
    \begin{tikzcd}
    &[-30pt]&[-30pt]&[-25pt] M \arrow[swap]{dl}{(d_2^2,d_0^2)} \arrow{dr} &[-20pt]&[-15pt]&\\
    && X \times X \arrow[swap]{dl}{(d_1^1,1)\times 1} \arrow{dr}{s_0^1 \times 1} && T \arrow{dl} \arrow{dr} && \\
    & E \times X \times X \arrow[swap]{dl}{p_{23}} \arrow[swap]{dr}{s_0^1 \times 1 \times 1} && M \times X \arrow[swap]{dl}{(d_2^2,d_0^2)\times 1} \arrow{dr}{d_1^2 \times 1} && M \arrow{dl}{(d_2^2,d_0^2)} \arrow{dr}{d_1^2} & \\
    X \times X &&X \times X \times X && X \times X && X
    \end{tikzcd}
    \caption{Composition of spans representing the left side of \eqref{diag:leftface}.}
    \label{fig:leftface1}
\end{figure}
\begin{figure}[th]
    \begin{tikzcd}
    &[-10pt]&&[-15pt] M \arrow[swap]{dl}{(d_1^1 d_1^2,1)} \arrow{dr}{d^2_1} &[-15pt]&[-10pt]&\\
    && E \times M \arrow[swap]{dl}{p_2} \arrow{dr}{1 \times d_1^2} && X \arrow[swap]{dl}{(d_1^1,1)} \arrow{dr}{s_0^1} && \\
    & M \arrow[swap]{dl}{(d_2^2,d_0^2)} \arrow{dr}{d_1^2} && E \times X \arrow[swap]{dl}{p_2} \arrow{dr}{s_0^0 \times 1} && M \arrow{dl}{(d_2^2,d_0^2)} \arrow{dr}{d_1^2} & \\
    X \times X &&X && X \times X && X
    \end{tikzcd}
    \caption{Composition of spans representing \eqref{diag:leftface2}.}
    \label{fig:leftface2}
\end{figure}
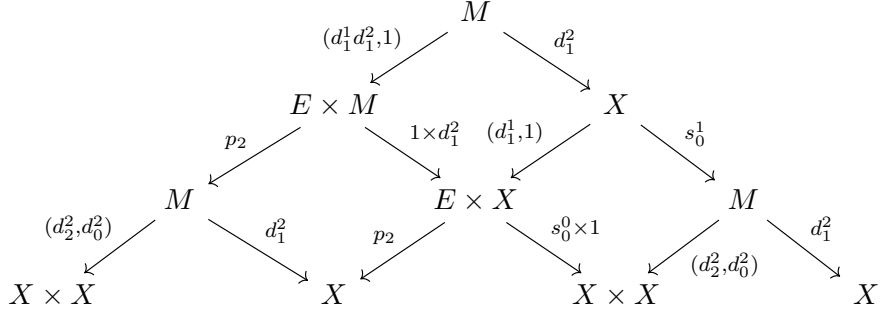
We then perform a slide move to obtain a natural representative of the right side of \eqref{diag:leftface}, shown in Figure \ref{fig:leftface3}.
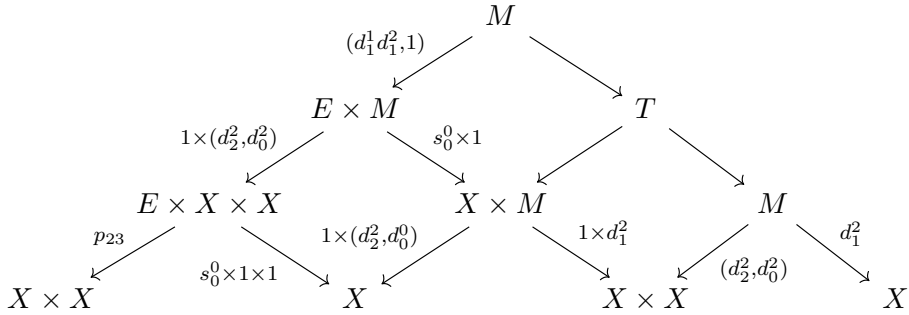
\begin{figure}[th]
    \begin{tikzcd}
    &[-20pt]&[-25pt]&[-15pt] M \arrow[swap]{dl}{(d_1^1 d_1^2,1)} \arrow{dr} &[-15pt]&[-10pt]&\\
    && E \times M \arrow[swap]{dl}{1 \times (d_2^2,d_0^2)} \arrow{dr}{s_0^0 \times 1} && T \arrow[swap]{dl} \arrow{dr} && \\
    & E \times X \times X \arrow[swap]{dl}{p_{23}} \arrow{dr}[swap]{s_0^0 \times 1 \times 1} && X \times M \arrow[swap]{dl}{1 \times (d_2^2, d_0^0)} \arrow{dr}{1 \times d_1^2} && M \arrow{dl}{(d_2^2,d_0^2)} \arrow{dr}{d_1^2} & \\
    X \times X &&X && X \times X && X
    \end{tikzcd}
    \caption{Composition of spans representing the right side of \eqref{diag:leftface}.}
    \label{fig:leftface3}
\end{figure}
Because the compositions in Figures \ref{fig:leftface1} and \ref{fig:leftface3} should both give the same representation of the middle of \eqref{diag:leftface}, the maps $M \to E \times X \times X$ are equal, so we obtain the identity
\begin{equation}\label{eqn:faceface2}
    d_1^1 d_2^2 = d_1^1 d_1^2.
\end{equation}
A similar analysis for the equation
\stringdiagramlabel{
\identity{-4}{2}
\identity{-2}{2}
\unit{-1}{2}
\multiplication{-3}{0}
\identity{-1}{0}
\multiplication{-2}{-2}
\equals{0}{0}
\identity{1}{1}
\identity{2}{1}
\unit{3}{1}
\tripleprod{2}{-1}
\equals{4}{0}
\unit{8}{2}
\identity{6}{2}
\identity{5}{2}
\multiplication{7}{0}
\identity{5}{0}
\multiplication{6}{-2}
}{diag:rightface}
yields the identity
\begin{equation}\label{eqn:faceface3}
    d_0^1 d_1^2 = d_0^1 d_0^2.
\end{equation}

Together, \eqref{eqn:ds1}, \eqref{eqn:ds2}, \eqref{eqn:e}, \eqref{eqn:faceface1}, \eqref{eqn:faceface2}, and \eqref{eqn:faceface3} are the identities necessary to show that $M \threearrows X \arrows E$ is a $2$-truncated simplicial set.
\end{proof}

One can readily verify that the constructions in Theorems \ref{thm:simp2monoid} and \ref{thm:monoid2simp} give a one-to-one correspondence, up to isomorphism, between monoids in $\Span$ and $2$-truncated simplicial sets satisfying the conditions in Lemmas \ref{lemma:simpunit} and \ref{lemma:simpassociativity}.

\subsection{Relation to 2-Segal sets}
\label{sec:2segal}

It was shown by Stern \cite{stern:2segal} that there is an $\infty$-categorical equivalence between pseudomonoids in the $(2,1)$-category of spans and $2$-Segal simplicial sets (see \cites{dyckerhoff-kapranov, gkt, boors, penney}). We sketch here the relationship between his result and the correspondence in Theorems \ref{thm:simp2monoid} and \ref{thm:monoid2simp}.

The pseudomonoids considered in \cite{stern:2segal} have more structure than the monoids in $\Span$ considered here, requiring specific isomorphisms of spans representing the unit and associativity axioms, as well as additional coherence conditions on these isomorphisms. Given such a pseudomonoid, one can obtain a monoid in $\Span$ by forgetting the isomorphisms. In the other direction, given a monoid in $\Span$, one could choose unitor and associator isomorphisms, but it may not be possible to make the choices so that the coherence conditions are satisfied.

Stern's correspondence, combined with ours, implies a similar relationship between $2$-Segal sets and simplicial sets satisfying the conditions in Theorem \ref{thm:simp2monoid}, which we can now see directly.

Suppose that $\mathcal{X}= X_\bullet$ is a $2$-Segal set. It was shown in \cite{fgkpw} that $X_\bullet$ automatically satisfies unital conditions, which in the lowest dimension say that the diagrams
\begin{equation*}
\begin{tikzcd}
    X_1 \arrow{r}{d_0^1} \arrow[swap]{d}{s_1^1} & X_0 \arrow{d}{s_0^0} \\
    X_2 \arrow{r}{d_0^2 }& X_1
\end{tikzcd}
\;\;\;\;
\begin{tikzcd}
    X_1 \arrow{r}{d_1^1} \arrow[swap]{d}{s_0^1} & X_0 \arrow{d}{s_0^0} \\
    X_2 \arrow{r}{d_2^2 }& X_1
\end{tikzcd}
\end{equation*}
are pullbacks. These conditions are equivalent to the conditions in Lemma \ref{lemma:simpunit}.

The lowest-dimension $2$-Segal conditions say that the maps
\begin{align*}
    (d_0^3,d_2^3): X_3 &\to X_2 \bitimes{d_1^2}{d_0^2} X_2,
 &   (d_1^3,d_3^3): X_3 &\to X_2 \bitimes{d_2^2}{d_1^2} X_2
\end{align*}
are bijections. We note that the codomains of these maps are exactly the taco sets $T_{02}\mathcal{X}$ and $T_{13}\mathcal{X}$. Composing the two bijections, we obtain a bijection $T_{02}\mathcal{X} \cong T_{13}\mathcal{X}$ satisfying the condition of Lemma \ref{lemma:simpassociativity}.

In the other direction, suppose that $X_\bullet$ is a $2$-truncated simplicial set satisfying the conditions of Theorem \ref{thm:simp2monoid}. Choose a bijection as in Lemma \ref{lemma:simpassociativity}, and set $X_3 \subseteq \Delta_3 \mathcal{X}$ to be the graph of the bijection. It is possible to choose the bijection carefully so that $X_3$ contains all degenerate $3$-simplices, thus giving a $3$-truncated simplicial set that can be extended to a $3$-coskeletal simplicial set, which we will still denote as $X_\bullet$. 

By construction $X_\bullet$ satisfies the lowest-dimension $2$-Segal conditions. However, it is not necessarily the case that $X_\bullet$ satisfies the higher-dimensional $2$-Segal conditions. These conditions constitute coherence conditions, which may or may not be possible to satisfy, on the bijection in Lemma \ref{lemma:simpassociativity}.

To summarize, if $\mathcal{X}$ is a $2$-Segal set, then the $2$-truncation of $\mathcal{X}$ satisfies the conditions of Theorem \ref{thm:simp2monoid}. We note that there is a wealth of examples of $2$-Segal sets in, e.g.\ \cites{dyckerhoff-kapranov,gkt,boors}, from which we can obtain many interesting examples of monoids in $\Span$. 

On the other hand, obtaining a $2$-Segal set from a simplicial set satisfying the conditions of Theorem \ref{thm:simp2monoid} requires a choice of bijection as in Lemma \ref{lemma:simpassociativity}, subject to nontrivial coherence conditions.

\section{Frobenius objects in \texorpdfstring{$\Span$}{Span}}
\label{sec:frob}

\subsection{Nondegeneracy}\label{sec:nondegeneracy}

Let $X$ be a set. A morphism $\alpha: X \times X \spanto \bullet$ in $\Span$ is called \emph{nondegenerate} if there exists a morphism $\beta: \bullet \spanto X \times X$ such that the \emph{snake identity}
\begin{equation}\label{eqn:nondegen}
    (\alpha \times \idx) \circ (\idx \times \beta) = (\idx \times \alpha) \circ (\beta \times \idx) = \idx
\end{equation}
holds. If we depict $\alpha$ and $\beta$, respectively, by
\stringdiagram{
\pairing{0}{0}
\copairing{4}{1}
}
then \eqref{eqn:nondegen} is given by
\stringdiagram{
\identity{-1}{2}
\copairing{2}{2}
\pairing{0}{0}
\identity{3}{0}
\equals{4}{1}
\copairing{6}{2}
\identity{9}{2}
\identity{5}{0}
\pairing{8}{0}
\equals{10}{1}
\identity{11}{2}
\identity{11}{0}
}
which explains where the name ``snake identity'' comes from.

Let $A$ be a set with maps $\alpha_1, \alpha_2: A \to X$, so that the span
\begin{equation*}
    \begin{tikzcd}
        & [-10pt] A \arrow[swap]{dl}{(\alpha_1, \alpha_2)} \arrow{dr}& \\
        X \times X & & \bullet
    \end{tikzcd}
\end{equation*}
represents a morphism $\alpha: X \times X \spanto \bullet$. The following can be deduced from Proposition \ref{prop:spaniso} and the well-known fact that $\Span$ is a rigid category, but we include a more explicit proof here.

\begin{prop}\label{prop:nondegen}
$\alpha$ is nondegenerate if and only if $\alpha_1$ and $\alpha_2$ are bijections.
\end{prop}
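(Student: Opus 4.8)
The plan is to reduce both directions of the claim to Proposition~\ref{prop:spaniso}, using the characterization of the snake identity as an isomorphism statement in $\Span$. First I would make explicit what morphism the snake identity produces. The composition $(\alpha \times \idx) \circ (\idx \times \beta)$ is a morphism $X \spanto X$, and nondegeneracy of $\alpha$ amounts to requiring that this morphism (together with the mirror composition) equals $\idx_X$. Writing $\beta$ as a span $(B, \beta_1, \beta_2)$ with $\beta_1, \beta_2 : B \to X$, one computes the pullback for each composition and obtains an explicit span representing $(\alpha \times \idx) \circ (\idx \times \beta)$ with legs to $X$ expressed through $\alpha_1, \alpha_2, \beta_1, \beta_2$; requiring it to be isomorphic to the identity span on $X$ forces, via Proposition~\ref{prop:spaniso}, certain surjectivity and injectivity conditions on those maps, exactly as in the proof of that proposition.

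For the ``if'' direction I would argue directly: assuming $\alpha_1$ and $\alpha_2$ are bijections, take $B = A$ and $\beta = (A, \alpha_2, \alpha_1)$ (i.e.\ the same span with legs swapped), or more symmetrically define $\beta$ by $\beta_1 = \alpha_1$, $\beta_2 = \alpha_2$ after the appropriate transposition, and verify by hand that the two pullbacks in \eqref{eqn:nondegen} are, using bijectivity of $\alpha_1$ and $\alpha_2$ to identify the relevant fiber products, isomorphic to the identity span on $X$. Concretely, the fiber product defining one side collapses (since $\alpha_1$, $\alpha_2$ are bijective) to a set in bijection with $X$, and one checks the two legs become the identity; this is the same kind of computation as the explicit formulas for $\phi$ and $\phi'$ in the proof of Proposition~\ref{prop:spaniso}.

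For the ``only if'' direction, suppose $\alpha$ is nondegenerate with witness $\beta = (B,\beta_1,\beta_2)$. Since $\Span$ is monoidal and composition with the identity and the slide move can be upgraded to the level of sets (as discussed in Section~\ref{sec:span}), I would write down canonical set-level representatives of the two compositions in \eqref{eqn:nondegen} and deduce from their being isomorphic to the identity span that the projection maps out of the relevant pullbacks are injective and the remaining structure maps are surjective; then, chaining these as in the last paragraph of the proof of Proposition~\ref{prop:spaniso}, conclude that $\alpha_1$ and $\alpha_2$ are injective (hence bijective, surjectivity coming for free from the same analysis). Alternatively, and perhaps more cleanly, I would invoke the stated general facts: $\Span$ is rigid, so every object $X$ is dualizable, and $\alpha$ nondegenerate means precisely that $\alpha$ exhibits $X$ as its own dual with $\beta$ the coevaluation; standard rigidity arguments then show $\alpha$ is an isomorphism in the appropriate hom-set sense, and Proposition~\ref{prop:spaniso} applies. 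I expect the main obstacle to be purely bookkeeping: carefully writing the iterated pullbacks for $(\alpha \times \idx)\circ(\idx\times\beta)$ over $X \times X \times X$ and tracking which leg becomes which map, so that the reduction to Proposition~\ref{prop:spaniso} is transparent rather than a tangle of indices. I would organize the computation so that one side of \eqref{eqn:nondegen} yields the injectivity of a projection $p$ composed with bijectivity of $\beta$-legs forcing $\alpha_2$ injective, and the mirror side forces $\alpha_1$ injective, mirroring the symmetric structure of the proof of Proposition~\ref{prop:spaniso}.
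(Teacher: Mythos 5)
Your proposal is correct and follows essentially the same route as the paper: compute the two snake composites as iterated pullbacks, identify them with the fiber products $A \bitimes{\alpha_2}{\beta_1} B$ and $B \bitimes{\beta_2}{\alpha_1} A$, and observe that nondegeneracy of $\alpha$ is exactly the statement that the span $(A,\alpha_1,\alpha_2)$, viewed as an endomorphism $X \spanto X$, is invertible, so Proposition~\ref{prop:spaniso} applies. The paper packages this last reduction a bit more crisply (citing Proposition~\ref{prop:spaniso} as a black box rather than rerunning its injectivity/surjectivity chase), but the content is identical, and your rigidity-based alternative is the one the paper explicitly mentions and sets aside in favor of the explicit argument.
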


\begin{proof}
Consider a span 
\begin{equation*}
    \begin{tikzcd}
        & B \arrow{dr}{(\beta_1, \beta_2)} \arrow{dl}& [-10pt]\\
        \bullet && X \times X
    \end{tikzcd}
\end{equation*}
where $B$ is a set with maps $\beta_1, \beta_2: B \to X$, and let $\beta: \bullet \spanto X \times X$ denote the corresponding morphism in $\Span$. Then the composition of spans
\begin{equation}\label{diag:nondegen1}
    \begin{tikzcd}
& & [-15pt]   (X \times B) * (A \times X) \arrow{dl} \arrow{dr} &[-15pt]&\\
& X \times B \arrow[swap]{dl}{p_1} \arrow[swap]{dr}{1 \times(\beta_1, \beta_2)} && A \times X \arrow{dl}{(\alpha_1,\alpha_2) \times 1} \arrow{dr}{p_2} &\\
X && X \times X \times X && X        
    \end{tikzcd}
\end{equation}
represents $(\alpha \times \idx) \circ (\idx \times \beta)$. Using the identification
\begin{align*}
    &(X \times B) * (A \times X) \\
    &= \{(x,b,a,x') \in X \times B \times A \times X \suchthat x = \alpha_1(a), \beta_1(b) = \alpha_2(a), \beta_2(b) = x'\} \\
    &= A \bitimes{\alpha_2}{\beta_1} B,
\end{align*}
we can rewrite \eqref{diag:nondegen1} as
\begin{equation*}
    \begin{tikzcd}
& & [-15pt]   A \bitimes{\alpha_2}{\beta_1} B \arrow[swap]{dl}{\alpha_1 \times 1} \arrow{dr}{1 \times \beta_2} &[-15pt]&\\
& X \times B \arrow[swap]{dl}{p_1} \arrow[swap]{dr}{1 \times(\beta_1, \beta_2)} && A \times X \arrow{dl}{(\alpha_1,\alpha_2) \times 1} \arrow{dr}{p_2} &\\
X && X \times X \times X && X        
    \end{tikzcd}
\end{equation*}
which is isomorphic to
\begin{equation}\label{diag:nondegen3}
    \begin{tikzcd}
& & [-15pt]   A \bitimes{\alpha_2}{\beta_1} B \arrow[swap]{dl}{p_1} \arrow{dr}{p_2} &[-15pt]&\\
& A \arrow[swap]{dl}{\alpha_!} \arrow{dr}{\alpha_2} && B \arrow{dl}[swap]{\beta_1} \arrow{dr}{\beta_2} &\\
X && X && X        
    \end{tikzcd}
\end{equation}
via the identity map on $A \bitimes{\alpha_2}{\beta_1} B$. Similarly, the span
\begin{equation}\label{diag:nondegen4}
    \begin{tikzcd}
& & [-15pt]   B \bitimes{\beta_2}{\alpha_1} A \arrow[swap]{dl}{p_1} \arrow{dr}{p_2} &[-15pt]&\\
& B \arrow[swap]{dl}{\beta_!} \arrow{dr}{\beta_2} && A \arrow{dl}[swap]{\alpha_1} \arrow{dr}{\alpha_2} &\\
X && X && X        
    \end{tikzcd}
\end{equation}
represents the composition $(\idx \times \alpha) \circ (\beta \times \idx)$.

Thus we see that $\alpha$ is nondegenerate if and only if there exists $(B, \beta_1, \beta_2)$ such that the spans \eqref{diag:nondegen3} and \eqref{diag:nondegen4} are isomorphic to the identity on $X$. This is the case if and only if the morphism $\alpha^\flat: X \spanto X$ represented by the span $(A, \alpha_1, \alpha_2)$ is an isomorphism. The result then follows from Proposition \ref{prop:spaniso}.
\end{proof}

An immediate consequence of Proposition \ref{prop:nondegen} is the following.

\begin{cor}
A morphism $\alpha: X \times X \spanto \bullet$ is nondegenerate if and only if it can be represented by a span of the form
\begin{equation}\label{diag:alpha}
\begin{tikzcd}
    & X \arrow[swap]{dl}{(1, \hat{\alpha})} \arrow{dr} & \\
    X \times X & & \bullet
\end{tikzcd}
\end{equation}  
where $\hat{\alpha}: X \to X$ is a bijection. This representation is unique.
\end{cor}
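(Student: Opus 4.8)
The plan is to deduce the corollary directly from Proposition \ref{prop:nondegen} together with the uniqueness of canonical representatives involving the identity map. First I would establish the forward direction: suppose $\alpha$ is nondegenerate, represented by a span $(A, \alpha_1, \alpha_2)$. By Proposition \ref{prop:nondegen}, $\alpha_1$ and $\alpha_2$ are bijections. I would then use $\alpha_1$ itself as the bijection $\phi: A \to X$ that exhibits an isomorphism of spans between $(A, \alpha_1, \alpha_2)$ and $(X, (1, \hat\alpha), \,!\,)$, where $\hat\alpha := \alpha_2 \circ \alpha_1^{-1}$ and $!$ denotes the unique map to $\bullet$. One checks that the relevant triangle commutes: $(1, \hat\alpha) \circ \alpha_1 = (\alpha_1, \alpha_2 \alpha_1^{-1} \alpha_1) = (\alpha_1, \alpha_2)$, and the maps to $\bullet$ agree trivially. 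Since $\alpha_1$ is a bijection, this is a legitimate isomorphism of spans, so $\alpha$ is represented by a span of the form \eqref{diag:alpha} with $\hat\alpha$ a bijection (being a composite of bijections).

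For the reverse direction, if $\alpha$ is represented by a span of the form \eqref{diag:alpha} with $\hat\alpha$ a bijection, then the two component maps $X \to X$ are $1$ and $\hat\alpha$, both bijections, so Proposition \ref{prop:nondegen} immediately gives that $\alpha$ is nondegenerate. This direction is essentially immediate.

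For uniqueness, I would argue as follows: suppose $(X, (1, \hat\alpha),\,!\,)$ and $(X, (1, \hat\alpha'),\,!\,)$ both represent $\alpha$ and are isomorphic via a bijection $\psi: X \to X$. Commutativity of the isomorphism triangle \eqref{diag:isospan} forces $(1, \hat\alpha') \circ \psi = (1, \hat\alpha)$; reading off the first component gives $\psi = 1$, and then the second component gives $\hat\alpha' = \hat\alpha$. Hence the representation is unique. The main (and only mild) obstacle is bookkeeping: making sure the isomorphism-of-spans diagram is oriented correctly and that the trivial maps to $\bullet$ are handled without fuss; there is no real mathematical difficulty here, since all the substantive content is already packaged in Proposition \ref{prop:nondegen}.
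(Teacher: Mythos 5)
Your proof is correct and follows exactly the route the paper intends: the paper states this corollary as an immediate consequence of Proposition \ref{prop:nondegen} without writing out details, and your argument (using $\alpha_1$ as the isomorphism of spans to normalize the left leg to $(1,\hat{\alpha})$ with $\hat{\alpha}=\alpha_2\circ\alpha_1^{-1}$, plus the forced $\psi=1$ for uniqueness) is precisely the elaboration being left to the reader.
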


\begin{remark}
When $\alpha$ is represented by \eqref{diag:alpha}, then the span
\begin{equation}\label{diag:beta}
\begin{tikzcd}
    & X \arrow{dr}{(\hat{\alpha},1)} \arrow{dl} & \\
    \bullet && X \times X
\end{tikzcd}
\end{equation} 
represents the corresponding morphism $\beta: \bullet \spanto X \times X$.
\end{remark}

\subsection{Frobenius objects}

\begin{definition}
A \emph{Frobenius object} in $\Span$ is a monoid $(X, \eta, \mu)$ in $\Span$ that is equipped with a morphism $\varepsilon: X \spanto \bullet$ (\emph{counit}), such that $\varepsilon \circ \mu$ is nondegenerate.
\end{definition}

Suppose that $X$ is a Frobenius object in $\Span$, and let $\alpha = \varepsilon \circ \mu$. Since $\alpha$ is nondegenerate, there is a unique bijection $\hat{\alpha}: X \to X$ such that $\alpha$ is represented by \eqref{diag:alpha}.
The counit can then be recovered from $\hat{\alpha}$, as follows.
\begin{lemma}\label{lemma:counit}
If the unit morphism $\eta$ is represented as in \eqref{diag:monoidspan}, then
\begin{equation}\label{diag:counit}
\begin{tikzcd}
    & E \arrow[swap]{dl}{\hat{\alpha}\circ s_0^0} \arrow{dr} & \\
    X & & \bullet
\end{tikzcd}
\end{equation}
represents the counit $\varepsilon$.
\end{lemma}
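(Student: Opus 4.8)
The plan is to use the unit axiom to reduce the identification of $\varepsilon$ to a single pullback computation. Since $\alpha = \varepsilon \circ \mu$ and the unit axiom gives $\mu \circ (\eta \times \idx) = \idx$ (under the canonical identification $\bullet \times X \cong X$), we have
\[
\varepsilon = \varepsilon \circ \idx = \varepsilon \circ \mu \circ (\eta \times \idx) = \alpha \circ (\eta \times \idx).
\]
So it suffices to compute a representative of the span $\alpha \circ (\eta \times \idx)\colon X \spanto \bullet$ and check that it agrees with \eqref{diag:counit}.

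Next I would write down the relevant spans. From \eqref{diag:monoidspan}, the morphism $\eta \times \idx\colon \bullet \times X \spanto X \times X$ is represented by $\bullet \times X \xleftarrow{\ p_2\ } E \times X \xrightarrow{\ s_0^0 \times 1\ } X \times X$, where $p_2$ denotes projection to $X$ after using $\bullet\times X \cong X$; and by the Corollary following Proposition \ref{prop:nondegen}, $\alpha$ is represented by \eqref{diag:alpha}, i.e.\ $X \times X \xleftarrow{(1,\hat\alpha)} X \to \bullet$. The pullback defining the composition is
\[
\{(e,x,y) \in E \times X \times X \suchthat s_0^0(e) = y,\ x = \hat\alpha(y)\},
\]
which is carried bijectively onto $E$ by $(e,x,y)\mapsto e$, with inverse $e \mapsto (e, \hat\alpha(s_0^0(e)), s_0^0(e))$. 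Reading off the two legs of the composite span through this identification: the leg to $X$ is $e \mapsto x = \hat\alpha(s_0^0(e))$, i.e.\ $\hat\alpha\circ s_0^0$, and the leg to $\bullet$ is the unique map $E \to \bullet$. This is exactly the span \eqref{diag:counit}, so $\varepsilon$ is represented by \eqref{diag:counit}.

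I do not expect a serious obstacle here; the computation is a single explicit pullback. The only points requiring care are the bookkeeping of the monoidal-unit identifications $\bullet \times X \cong X$ when invoking the unit axiom, and checking that the pullback genuinely reduces to $E$ because $(1,\hat\alpha)$ is the graph of a map (so the fiber product with $s_0^0 \times 1$ is just the equalizer-type condition $x = \hat\alpha(s_0^0(e))$). If desired, the same conclusion can be reached by instead invoking the natural representatives for compositions with identity morphisms described in Section \ref{sec:span}, but the direct pullback above is the shortest route.
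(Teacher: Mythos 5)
Your proposal is correct and follows essentially the same route as the paper: reduce via the unit axiom to $\varepsilon = \alpha \circ (\eta \times \idx)$, compute the single pullback, and identify it with $E$ so that the leg to $X$ becomes $\hat{\alpha}\circ s_0^0$. The paper's proof is this exact computation.
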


\begin{proof}
It follows from the unit axiom that $\varepsilon = \varepsilon \circ \mu \circ (\eta \times \idx) = \alpha \circ (\eta \times \idx)$. Thus, the composition of spans
\begin{equation}\label{diag:counit1}
    \begin{tikzcd}
        && [-15pt] (E \times X) * X \arrow{dl} \arrow{dr} &[-15pt]& \\
        & E \times X \arrow[swap]{dl}{p_2} \arrow{dr}{s_0^0 \times 1} && X \arrow[swap]{dl}{(1,\hat{\alpha})} \arrow{dr} & \\
        X && X \times X && \bullet
    \end{tikzcd}
\end{equation}
represents $\varepsilon$. Using the identification
\begin{align*}
    (E \times X) * X &= \{(e,x,x') \in E \times X \times X \suchthat s_0^0(e) = x', x = \hat{\alpha}(x')\} \\
    &= E,
\end{align*}
we can rewrite \eqref{diag:counit1} as
\begin{equation*}
    \begin{tikzcd}
        && [-15pt] E \arrow[swap]{dl}{(1, \hat{\alpha} \circ s_0^0)} \arrow{dr}{s_0^0} &[-15pt]& \\
        & E \times X \arrow[swap]{dl}{p_2} \arrow{dr}{s_0^0 \times 1} && X \arrow[swap]{dl}{(1,\hat{\alpha})} \arrow{dr} & \\
        X && X \times X && \bullet
    \end{tikzcd}
\end{equation*}
which gives the result.
\end{proof}

Now, suppose that $(X, \eta, \mu)$ is a monoid in $\Span$, and that $\hat{\alpha}:X \to X$ is a bijection. We can then define a morphism $\varepsilon: X \spanto \bullet$ via \eqref{diag:counit}. As a result of the correspondence in Lemma \ref{lemma:counit}, nondegeneracy of $\varepsilon \circ \mu$ is equivalent to the condition $\alpha = \varepsilon \circ \mu$. This gives us an alternative characterization of Frobenius structures.
\begin{prop}\label{prop:frob}
Suppose $(X, \eta, \mu)$ is a monoid in $\Span$ equipped with a bijection $\hat{\alpha}:X \to X$. Let $\alpha: X \times X \spanto \bullet$ and $\varepsilon: X \spanto \bullet$ be defined as in \eqref{diag:alpha} and \eqref{diag:counit}, respectively. Then $(X, \eta, \mu, \varepsilon)$ is a Frobenius object in $\Span$ if and only if $\alpha = \varepsilon \circ \mu$.
\end{prop}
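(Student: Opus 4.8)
The plan is to prove the two implications separately; the reverse one is immediate and the forward one rests on Lemma~\ref{lemma:counit}. Suppose first that $\alpha = \varepsilon\circ\mu$. Since $\alpha$ is by construction of the form \eqref{diag:alpha} with $\hat{\alpha}$ a bijection, the corollary to Proposition~\ref{prop:nondegen} shows that $\alpha$ is nondegenerate; hence $\varepsilon\circ\mu$ is nondegenerate, which is by definition the condition for $(X,\eta,\mu,\varepsilon)$ to be a Frobenius object.

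Conversely, suppose $(X,\eta,\mu,\varepsilon)$ is a Frobenius object, so that $\varepsilon\circ\mu$ is nondegenerate. By the corollary to Proposition~\ref{prop:nondegen}, $\varepsilon\circ\mu$ is represented by a \emph{unique} span of the form \eqref{diag:alpha}; write $\hat{\gamma}\colon X\to X$ for the corresponding bijection, so that $\varepsilon\circ\mu$ is obtained from $\hat{\gamma}$ in exactly the way $\alpha$ is obtained from $\hat{\alpha}$. I would then apply Lemma~\ref{lemma:counit} to this Frobenius object: its distinguished bijection is $\hat{\gamma}$, so the lemma says that $\varepsilon$ is represented by the span \eqref{diag:counit} with $\hat{\gamma}$ in place of $\hat{\alpha}$. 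On the other hand, $\varepsilon$ is by construction represented by \eqref{diag:counit} with $\hat{\alpha}$. Comparing these two representatives of $\varepsilon$, I would deduce $\hat{\alpha}=\hat{\gamma}$; then $\alpha$ and $\varepsilon\circ\mu$ are built from the same bijection via \eqref{diag:alpha}, and hence are the same morphism.

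The step I expect to be the main obstacle is the identification $\hat{\alpha}=\hat{\gamma}$ at the end. The two spans \eqref{diag:counit} under comparison have left legs $\hat{\alpha}\circ s_0^0$ and $\hat{\gamma}\circ s_0^0$, and, because the degeneracy map $s_0^0$ is injective, an isomorphism between these spans only forces $\hat{\alpha}(\im(s_0^0)) = \hat{\gamma}(\im(s_0^0))$, which a priori is weaker than $\hat{\alpha}=\hat{\gamma}$. To close this gap one must use more of the Frobenius structure than just the isomorphism class of the morphism $\varepsilon$: for instance, one can track the explicit isomorphisms of spans witnessing the two unit identities $\varepsilon = (\varepsilon\circ\mu)\circ(\eta\times\idx)$ and $\varepsilon = (\varepsilon\circ\mu)\circ(\idx\times\eta)$, re-running in the presence of $\hat{\alpha}$ the slide-move and pullback bookkeeping from the proof of Theorem~\ref{thm:monoid2simp}, so that $\hat{\gamma}$ gets pinned down on all of $X$ and not merely on $\im(s_0^0)$. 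Making that bookkeeping precise is the real content of the argument.
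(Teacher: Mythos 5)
Your ``if'' direction is correct and complete: since $\alpha$ is represented by \eqref{diag:alpha} with $\hat{\alpha}$ a bijection, it is nondegenerate by the Corollary to Proposition \ref{prop:nondegen}, so $\alpha=\varepsilon\circ\mu$ forces $\varepsilon\circ\mu$ to be nondegenerate. For the converse the paper itself offers only a one-sentence appeal to ``the correspondence in Lemma \ref{lemma:counit},'' so your attempt is already more detailed than the printed argument, and the obstacle you flag at the end is exactly the right place to worry.

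That obstacle, however, is not a bookkeeping issue that more careful tracking of unitors and slide moves can repair: the converse implication is false as literally stated, so no completion along your proposed lines can succeed. The reason is the one you already isolated: the morphism $\varepsilon$ of \eqref{diag:counit} depends on $\hat{\alpha}$ only through the subset $\hat{\alpha}(s_0^0(E))\subseteq X$, so distinct bijections can define \emph{literally the same} counit $\varepsilon$, hence the same morphism $\varepsilon\circ\mu$ and the same answer to ``is $(X,\eta,\mu,\varepsilon)$ Frobenius?'', while defining different pairings $\alpha$; at most one such $\hat{\alpha}$ can satisfy $\alpha=\varepsilon\circ\mu$. Concretely, let $X=\Z/3\Z$ with the monoid structure from its nerve ($E=\bullet$, $s_0^0(\bullet)=0$, $\mu$ the graph of addition), and take $\hat{\alpha}=\mathrm{id}$. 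Then $\varepsilon$ is the span $X\fromlabel{0}\bullet\to\bullet$, and $\varepsilon\circ\mu$ is represented by $\{(x,-x)\}\hookrightarrow X\times X$, i.e.\ by \eqref{diag:alpha} with the bijection $x\mapsto -x$; this is nondegenerate, so $(X,\eta,\mu,\varepsilon)$ is a Frobenius object, yet $\alpha$ is the diagonal span $x\mapsto(x,x)$, and $\alpha\ne\varepsilon\circ\mu$ because $-x\ne x$ on $\Z/3\Z$. No amount of re-running the pullback computations can pin $\hat{\gamma}$ down to equal $\hat{\alpha}$ off $\im(s_0^0)$, because $\varepsilon$ --- and with it the entire Frobenius structure --- simply does not remember $\hat{\alpha}$ there.

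What is true, and what Lemma \ref{lemma:counit} together with the Corollary to Proposition \ref{prop:nondegen} actually delivers, is this: every Frobenius object $(X,\eta,\mu,\varepsilon)$ on the given monoid arises from the construction for exactly one bijection, namely the $\hat{\gamma}$ canonically attached to the nondegenerate morphism $\varepsilon\circ\mu$, and for that choice one has $\alpha=\varepsilon\circ\mu$. So the assignment $\hat{\alpha}\mapsto\varepsilon$ restricts to a bijection between $\{\hat{\alpha}\suchthat \alpha=\varepsilon\circ\mu\}$ and Frobenius structures, which is the form of the statement actually used downstream (Theorem \ref{thm:frobenius} should be read as classifying those pairs). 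My recommendation: keep your first direction, replace the second by this surjectivity-plus-uniqueness statement (which does follow directly from Lemma \ref{lemma:counit} by your argument, with no need to identify $\hat{\alpha}$ and $\hat{\gamma}$ away from $\im(s_0^0)$), and record explicitly that the biconditional as printed fails for the ``wrong'' representatives $\hat{\alpha}$ of a given $\varepsilon$.
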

We remark that the result of Proposition \ref{prop:frob} holds more generally in any rigid monoidal category and is known by some people.

The condition in Proposition \ref{prop:frob} makes it relatively straightforward to understand Frobenius structures in terms of the simplicial set perspective, since we can view $\hat{\alpha}$ as a bijection of the set of $1$-simplices. 

\begin{thm}\label{thm:frobenius}
Let $X_\bullet$ be a $2$-truncated simplicial set satisfying the conditions in Lemmas \ref{lemma:simpunit} and \ref{lemma:simpassociativity}, and let $\hat{\alpha}:X_1 \to X_1$ be a bijection. Then the corresponding monoid in $\Span$, as given by Theorem \ref{thm:simp2monoid}, together with $\varepsilon$ given by \eqref{diag:counit}, is a Frobenius object in $\Span$ if and only if there exists a map $\gamma:X_1 \to X_2$ such that
\begin{enumerate}
    \item  $d_0^2 \gamma(x) = \hat{\alpha}(x)$, $d_1^2 \gamma(x) \in \hat{\alpha}(s_0^0(X_0))$, and $d_2^2 \gamma(x) = x$ for all $x \in X_1$, and 
    \item if $\zeta \in X_2$ is such that $d_1^2 \zeta \in \hat{\alpha}(s_0^0(X_0))$, then $\zeta \in \gamma(X_1)$.
\end{enumerate}
\end{thm}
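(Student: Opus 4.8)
The plan is to unwind Proposition \ref{prop:frob} at the level of spans: by that proposition, the Frobenius condition is exactly the equation $\alpha = \varepsilon \circ \mu$, where $\alpha$ is represented by \eqref{diag:alpha} (with domain $X_1$, leg $(1,\hat\alpha)$) and $\mu$ is represented by the span in \eqref{diag:simp2monoid} with apex $X_2$ and legs $((d_2^2,d_0^2), d_1^2)$, and $\varepsilon$ is represented by \eqref{diag:counit} with apex $X_0$ and leg $\hat\alpha \circ s_0^0$. So first I would form the composite span representing $\varepsilon \circ \mu$: pull back $X_2 \tolabel{d_1^2} X_1 \fromlabel{\hat\alpha s_0^0} X_0$, getting the apex $P = \{(\zeta, u) \in X_2 \times X_0 \mid d_1^2 \zeta = \hat\alpha(s_0^0(u))\}$, which (since $\hat\alpha$ and $s_0^0$ are injective) is isomorphic to the subset $\{\zeta \in X_2 \mid d_1^2 \zeta \in \hat\alpha(s_0^0(X_0))\} \subseteq X_2$. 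The resulting span from $X_1\times X_1$ to $\bullet$ has this apex, with the map to $X_1 \times X_1$ being $\zeta \mapsto (d_2^2\zeta, d_0^2\zeta)$.

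Then the equation $\alpha = \varepsilon\circ\mu$ is, by definition of isomorphism of spans, the existence of a bijection $\phi$ from $X_1$ (the apex of \eqref{diag:alpha}) to the apex $\{\zeta \in X_2 \mid d_1^2\zeta \in \hat\alpha(s_0^0(X_0))\}$ of the composite, commuting with the legs to $X_1 \times X_1$; that is, $(d_2^2\phi(x), d_0^2\phi(x)) = (x, \hat\alpha(x))$ for all $x$. Setting $\gamma := \phi$ and viewing it as a map $X_1 \to X_2$, this says precisely: $d_2^2\gamma(x) = x$, $d_0^2\gamma(x) = \hat\alpha(x)$, and (automatically, since $\gamma(x)$ lands in the apex) $d_1^2\gamma(x) \in \hat\alpha(s_0^0(X_0))$ — these are the three clauses of condition (1). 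And the statement that $\gamma$ is a \emph{bijection} onto the apex unpacks into two parts: injectivity, which follows for free because $d_2^2\gamma(x)=x$ already shows $\gamma$ is injective; and surjectivity onto $\{\zeta \mid d_1^2\zeta \in \hat\alpha(s_0^0(X_0))\}$, which is exactly condition (2). So I would argue both directions: given a Frobenius structure, the iso $\phi$ produces $\gamma$ satisfying (1) and (2); conversely, given $\gamma$ satisfying (1) and (2), the map $\gamma$ is automatically injective (from $d_2^2\gamma(x)=x$) and, by (2), surjective onto the composite's apex, hence a bijection of spans, giving $\alpha = \varepsilon\circ\mu$ and hence (via Proposition \ref{prop:frob}) a Frobenius object.

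One subtlety I would be careful about: in condition (1) the membership $d_1^2\gamma(x) \in \hat\alpha(s_0^0(X_0))$ is not truly an independent requirement once we insist $\gamma(x)$ lies in the apex of the composite span — but stating the theorem with all three clauses of (1) explicit makes the correspondence with the bijection-of-spans cleaner, and I would simply note that clause. I would also double check that the reduction of the pullback $P$ to a subset of $X_2$ is legitimate, i.e.\ that $u$ is uniquely determined by $\zeta$: this uses injectivity of $\hat\alpha\circ s_0^0$, which holds since $\hat\alpha$ is a bijection and $s_0^0$ is injective (the latter being part of being a simplicial set, and used already in Lemma \ref{lemma:simpunit}).

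I expect the main obstacle to be purely bookkeeping: correctly computing the composite span $\varepsilon\circ\mu$ and identifying its apex, and then matching the data of an iso of spans with the pair of conditions — there is no deep idea, but the indices and the $\hat\alpha$-twist must be tracked precisely, especially the asymmetry that $d_0^2$ carries the twist $\hat\alpha$ while $d_2^2$ does not, reflecting the convention in \eqref{diag:alpha} that $\alpha$ is represented by the leg $(1,\hat\alpha)$ rather than $(\hat\alpha,1)$.
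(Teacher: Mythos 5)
Your proposal is correct and follows essentially the same route as the paper's proof: invoke Proposition \ref{prop:frob} to reduce to $\alpha = \varepsilon\circ\mu$, identify the pullback apex of $\varepsilon\circ\mu$ with $\{\zeta \in X_2 \mid d_1^2\zeta \in \hat{\alpha}(s_0^0(X_0))\}$ via injectivity of $\hat{\alpha}\circ s_0^0$, and then unpack the span isomorphism into conditions (1) and (2), with injectivity of $\gamma$ automatic from $d_2^2\gamma(x)=x$. The bookkeeping of the $\hat{\alpha}$-twist on the $d_0^2$ leg is handled exactly as in the paper.
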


\begin{proof}
From Proposition \ref{prop:frob}, we have that $X_1$, with the given data, is a Frobenius object in $\Span$ if and only if $\alpha = \varepsilon \circ \mu$, where $\alpha$ and $\varepsilon$ are given by \eqref{diag:alpha} and \eqref{diag:counit}, respectively. Thus we consider the composition
\begin{equation}\label{diag:alphafrob}
    \begin{tikzcd}
        && X_2 * X_0 \arrow{dl} \arrow{dr} && \\
        & X_2 \arrow[swap]{dl}{(d_2^2,d_0^2)} \arrow{dr}{d_1^2} && X_0 \arrow[swap]{dl}{\hat{\alpha}\circ s_0^0} \arrow{dr} \\
        X_1 \times X_1 && X_1 && \bullet
    \end{tikzcd}
\end{equation}
which represents $\varepsilon \circ \mu$. We can rewrite the pullback in \eqref{diag:alphafrob} using the following identification:
\begin{equation}\label{eqn:epsilonmu}
\begin{split}
    X_2 * X_0 &= \{(\zeta, u) \in X_2 \times X_0 \suchthat d_1^2(\zeta) = \hat{\alpha} \circ s_0^0(u)\} \\
    &= \{ \zeta \in X_2 \suchthat d_1^2(\zeta) \in \hat{\alpha}(s_0^0(X_0))\}.
\end{split}
\end{equation}
Here, we have used the fact that $\hat{\alpha}$ and $s_0^0$ are both injective, so the $u$, if it exists, is unique.

The equation $\alpha = \varepsilon \circ \mu$ holds if and only if there exists an isomorphism of spans from \eqref{diag:alpha} to \eqref{diag:alphafrob}. Using \eqref{eqn:epsilonmu}, we see that such an isomorphism is given by a map $\gamma: X_1 \to X_2$ such that $d_1^2(\gamma(x)) \in \hat{\alpha}(s_0^0(X_0))$ for all $x \in X_1$. Compatibility with the spans adds the requirements that $d_2^2(\gamma(x)) = x$ and $d_0^2(\gamma(x)) = \hat{\alpha}(x)$. These conditions already imply that $\gamma$ is injective, so for $\gamma$ to be an isomorphism we only need to add the surjectivity condition (2).
\end{proof}

Intuitively, we can think of Theorem \ref{thm:frobenius} as saying that a Frobenius structure gives an additional degeneracy-type map $\gamma$, relative to the bijection $\hat{\alpha}$. Condition (1) expresses compatibility conditions between $\gamma$ and the face maps, and condition (2) is analogous to the conditions in Lemma \ref{lemma:simpunit}. The following example is illustrative.

\begin{example}\label{ex:groupoid}
Let $G_1 \arrows G_0$ be a groupoid, and let $G_\bullet$ be the nerve; see Example \ref{ex:cat}. Let $\hat{\alpha}: G_1 \to G_1$ be given by $\hat{\alpha}(x) = x^{-1}$. Then there is a unique $\gamma: G_1 \to G_2$ satisfying condition (1) in Theorem \ref{thm:frobenius}, given by $\gamma(x) = (x,x^{-1})$. Condition (2) then says that, if $x,y \in G_1$ are such that $xy$ is an identity morphism, then $y = x^{-1}$, which is true for a groupoid.

It is worth noting that the above Frobenius structure is not the only possibility for a groupoid. The most general possibility is as follows. Let $\sigma: G_0 \to G_1$ be a section of the target map, and set
\[ \hat{\alpha}(x) = x^{-1}\sigma(1_{t(x)}).\]
Then the map $\gamma(x) = (x, \hat{\alpha}(x))$ satisfies the conditions of Theorem \ref{thm:frobenius}.
\end{example}

\subsection{Comultiplication}

If $X$ is a Frobenius object in $\Span$, then it has a naturally-induced \emph{comultiplication} $\delta: X \spanto X \times X$, defined as $\delta = (\idx \times \mu) \circ (\beta \times \idx)$; see Figure \ref{fig:comult}. General diagrammatic arguments prove that $\delta$ is counital (with counit $\varepsilon$), coassociative, and satisfies the \emph{Frobenius equation} $(\mu \times \idx) \circ (\idx \times \delta) = \delta \circ \mu = (\idx \times \mu) \circ (\delta \circ \idx)$.
\begin{figure}[th]
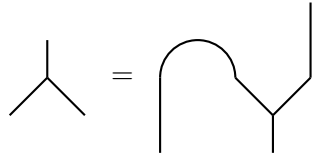

\stringdiagram{
\comultiplication{0}{0}
\equals{2}{0}
\copairing{4}{1}
\identity{7}{1}
\identity{3}{-1}
\multiplication{6}{-1}
}
    \caption{Definition of comultiplication via string diagrams.}
    \label{fig:comult}
\end{figure}

\begin{prop}
Let $\mu$ and $\alpha$ be represented as in \eqref{diag:monoidspan} and \eqref{diag:alpha}. Then
\[
\begin{tikzcd}
    & M \arrow[swap]{dl}{d_0^2} \arrow{dr}{(\hat{\alpha} \circ d_2^2, d_1^2)}& \\
    X && X \times X
\end{tikzcd}
\]
represents the comultiplication $\delta$.
\end{prop}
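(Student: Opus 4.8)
The plan is to compute the composition $\delta = (\idx \times \mu)\circ(\beta\times\idx)$ directly at the level of spans, using the explicit representatives already in hand, and then recognize the resulting span as the one in the statement (up to the identity isomorphism on $M$).

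First I would assemble the ingredients. By the remark following the corollary in Section~\ref{sec:nondegeneracy}, the morphism $\beta:\bullet\spanto X\times X$ is represented by \eqref{diag:beta}, and by hypothesis $\mu$ is represented by the right-hand span of \eqref{diag:monoidspan}. Taking monoidal products with identity morphisms (which acts factorwise on apexes and legs, so no choices are involved), the apex of $\beta\times\idx$ is $X\times X$ with left leg the projection onto the second factor and right leg $(x,y)\mapsto(\hat\alpha(x),x,y)$, while the apex of $\idx\times\mu$ is $X\times M$ with left leg $(z,m)\mapsto(z,d_2^2 m,d_0^2 m)$ and right leg $(z,m)\mapsto(z,d_1^2 m)$.

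Next I would form the pullback over $X\times X\times X$ that defines the composition, as in Figure~\ref{fig:composition}. It consists of pairs $((x,y),(z,m))$ with $z=\hat\alpha(x)$, $x=d_2^2 m$, and $y=d_0^2 m$; since $x$, $y$, and $z$ are then all determined by $m\in M$, the pullback is canonically identified with $M$ via $m\mapsto\bigl((d_2^2 m,d_0^2 m),(\hat\alpha(d_2^2 m),m)\bigr)$. Under this identification, the left leg of the composite (the second-factor projection of the apex of $\beta\times\idx$, landing in the domain $\bullet\times X=X$) becomes $m\mapsto d_0^2 m$, and the right leg (the right leg of $\idx\times\mu$, landing in $X\times X$) becomes $m\mapsto(\hat\alpha(d_2^2 m),d_1^2 m)$. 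This is exactly the span in the statement, with the identity map on $M$ providing the isomorphism.

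I do not expect a genuine obstacle: the only thing requiring care is the bookkeeping of which factor of each Cartesian product plays which role, together with the implicit coherence identifications $\bullet\times X\cong X$ and $(X\times X)\times X\cong X\times(X\times X)$ needed for the composite to be defined. As a consistency check one could verify directly, from the simplicial identities \eqref{eqn:ds1}--\eqref{eqn:ds2} and Lemma~\ref{lemma:simpunit}, that the resulting $\delta$ is counital with counit $\varepsilon$, but this is not needed for the proof.
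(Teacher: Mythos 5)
Your proposal is correct and follows essentially the same route as the paper: both compute $(\idx\times\mu)\circ(\beta\times\idx)$ by forming the pullback over $X\times X\times X$ of the representatives of $\beta\times\idx$ (apex $X\times X$) and $\idx\times\mu$ (apex $X\times M$), identify that pullback with $M$, and read off the legs $d_0^2$ and $(\hat\alpha\circ d_2^2, d_1^2)$. Your write-up just spells out the bookkeeping that the paper's single composition diagram leaves implicit.
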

\begin{proof}
The composition of spans
\[
\begin{tikzcd}
    &&[-20pt] M \arrow[swap]{dl}{(d_2^2,d_0^2)} \arrow{dr}{(\hat{\alpha}\circ d_2^2, 1)} &[-20pt]&[-15pt] \\
    & X \times X \arrow[swap]{dl}{p_2} \arrow[swap]{dr}{(\hat{\alpha},1)\times 1} && X \times M \arrow[swap]{dl}{1 \times (d_2^2,d_0^2)} \arrow{dr}{1 \times d_1^2} & \\
    X && X \times X \times X && X \times X
\end{tikzcd}
\]
represents $(\idx \times \mu) \circ (\beta \times \idx) = \delta$.
\end{proof}

\subsection{Commutativity and TQFT}

Let $X$ be a set, and let $\hat{\tau}: X \times X \to X \times X$ be the ``twist map'' that exchanges the components: $\hat{\tau}(x,x') = (x',x)$. The span
\[
\begin{tikzcd}
    & X \times X \arrow[swap]{dl}{\hat{\tau}} \arrow{dr}{1}& \\
    X \times X && X \times X
\end{tikzcd}
\]
gives an induced ``twist morphism'' $\tau: X \times X \spanto X \times X$ in $\Span$.

A monoid $X$ in $\Span$ is \emph{commutative} if $\mu \circ \tau = \mu$. A straightforward calculation shows that, if $\mu$ is represented as in \eqref{diag:monoidspan}, then $X$ is commutative if and only if there exists a bijection $\theta: M \to M$ such that $d_1^2 \circ \theta = d_1^2$, $d_2^2 \circ \theta = d_0^2$, and $d_0^2 \circ \theta = d_2^2$. 

From the general theory of TQFT (e.g. \cite{kock-book}), we know that a commutative Frobenius object in $\Span$ is equivalent to a symmetric monoidal functor from the oriented two-dimensional cobordism category to $\Span$. From such a functor, we can obtain invariants of closed oriented surfaces as the partition function of the TQFT. Specifically, for the genus $g$ surface $\Sigma_g$, the partition function is the morphism
\begin{equation}\label{eqn:partition}
 Z(\Sigma_g) = \varepsilon \circ (\mu \circ \delta)^g \circ \eta: \bullet \spanto \bullet.
\end{equation}
Such a morphism is equivalent to an isomorphism class of sets and can therefore be identified with a cardinality. If we're lucky, the cardinality will be finite, in which case $Z(\Sigma_g)$ can be viewed as a natural number.

A convenient way to compute $Z(\Sigma_g)$ is to count the number of trajectories in \eqref{eqn:partition}; see Section \ref{sec:spanww}.

\subsection{Symmetric Frobenius objects}\label{sec:symmetric}

A Frobenius object is called \emph{symmetric} if $\alpha \circ \tau = \alpha$. 

\begin{prop}
A Frobenius object in $\Span$ is symmetric if and only if $\hat{\alpha}^2 = 1$.
\end{prop}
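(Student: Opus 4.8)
The strategy is to compute $\alpha\circ\tau$ explicitly as a span and then compare it with the span \eqref{diag:alpha} representing $\alpha$, via the uniqueness statement in the Corollary following Proposition~\ref{prop:nondegen}.

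First I would set up the composition. The twist morphism $\tau\colon X\times X \spanto X\times X$ is represented by $(X\times X, \hat{\tau}, 1)$, and $\alpha$ by the span \eqref{diag:alpha} with apex $X$, left leg $(1,\hat{\alpha})$, and right leg the unique map to $\bullet$. Since the right leg of the span for $\tau$ is the identity on $X\times X$, the pullback defining $\alpha\circ\tau$ is $\{\,((x,\hat{\alpha}(x)),x) : x\in X\,\}$, which I would identify canonically with $X$. Tracing the two outer legs through this identification, the composite leg into $X\times X$ sends $x\mapsto \hat{\tau}(x,\hat{\alpha}(x)) = (\hat{\alpha}(x),x)$, so $\alpha\circ\tau$ is represented by the span with apex $X$, left leg $(\hat{\alpha},1)$, and right leg to $\bullet$.

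It then remains to decide when this span equals the span \eqref{diag:alpha} for $\alpha$. Both spans have apex $X$, so an isomorphism between them is a bijection $\phi\colon X\to X$ with $(1,\hat{\alpha})\circ\phi = (\hat{\alpha},1)$, which forces $\phi = \hat{\alpha}$ and $\hat{\alpha}\circ\phi = 1$, i.e.\ $\hat{\alpha}^2 = 1$; conversely, when $\hat{\alpha}^2=1$ the choice $\phi=\hat{\alpha}$ works. (Equivalently: $(\hat{\alpha},1)$ has both components bijective, so by Proposition~\ref{prop:nondegen} the span $\alpha\circ\tau$ is nondegenerate, and reparametrizing its apex by $\hat{\alpha}^{-1}$ puts it in the canonical form \eqref{diag:alpha} with $\widehat{\alpha\circ\tau} = \hat{\alpha}^{-1}$; the Corollary's uniqueness then gives $\alpha\circ\tau=\alpha \iff \hat{\alpha}^{-1}=\hat{\alpha}$.) There is no real obstacle here; the only thing requiring care is bookkeeping the pullback so that the twist lands on the correct leg.
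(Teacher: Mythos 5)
Your computation of $\alpha\circ\tau$ as the span $(X,(\hat{\alpha},1),!)$ and the subsequent comparison with \eqref{diag:alpha}, forcing $\phi=\hat{\alpha}$ and $\hat{\alpha}\circ\phi=1$, is correct and matches the paper's argument essentially step for step. The parenthetical reformulation via the uniqueness of the canonical representative is a harmless repackaging of the same point.
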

\begin{proof}
The composition of spans
\[
\begin{tikzcd}
    &[-15pt]&[-15pt] X \arrow[swap]{dl}{(1,\hat{\alpha})} \arrow{dr}{1} &[-10pt]& \\
    & X \times X \arrow[swap]{dl}{\hat{\tau}} \arrow{dr}{1} && X \arrow[swap]{dl}{(1,\hat{\alpha})} \arrow{dr} & \\
    X\times X && X \times X && \bullet
\end{tikzcd}
\]
represents $\alpha \circ \tau$. The equation $\alpha \circ \tau = \alpha$ holds if and only if there is a bijection $f:X \to X$ such that $(1, \hat{\alpha}) \circ f = \hat{\tau} \circ (1, \hat{\alpha})$. Expressing the latter condition in components, we obtain the equations $\hat{\alpha} = f$ and $\hat{\alpha} \circ f = 1$, from which the result follows.
\end{proof}

\begin{example}
Let $G$ be a group, and let $\omega \in G$ be an arbitrary (fixed) element of $G$. Then $G$ has the structure of a Frobenius object in $\Span$, with $\hat{\alpha}(x) = x^{-1}\omega$ (see Example \ref{ex:groupoid}). Since $\hat{\alpha}^2(x) = \omega^{-1} x \omega$, we see that this Frobenius structure is symmetric if and only if $\omega$ is in the center of $G$.
\end{example}

The second main result in Stern's paper \cite{stern:2segal} (building on the result discussed in Section \ref{sec:2segal}) is an $\infty$-categorical equivalence between Calabi-Yau algebra objects in the $(2,1)$-category of spans and $2$-Segal cyclic sets. This result relates to the result of Theorem \ref{thm:frobenius} in a way that parallels the discussion in Section \ref{sec:2segal}. Roughly, given a Calabi-Yau algebra object, one can obtain a symmetric Frobenius object by forgetting the higher categorical data. On the other hand, given a symmetric Frobenius object, one could try to choose the required higher data, but there are nontrivial coherence conditions that would need to be satisfied in order to obtain a Calabi-Yau algebra object.

As in Section \ref{sec:2segal}, there is a corresponding relationship on the simplicial set side. A cyclic structure on a $2$-Segal set includes an involution on $X_1$ that satisfies the conditions of Theorem \ref{thm:frobenius}, but the other direction requires extending the action of $\hat{\alpha}$ on $X_1$ to higher-dimensional simplices in a coherent way.

Finally, we point out that the result of Theorem \ref{thm:frobenius} includes non-symmetric Frobenius objects, and this suggests the existence of a higher categorical analogue that extends Stern's result.

\section{Examples}\label{sec:examples}

In this section, we look at some examples of commutative Frobenius objects in $\Span$, and we compute the associated topological invariants for closed surfaces.

\begin{example}\label{ex:abeliangp}
Let $G$ be a finite abelian group, and fix an element $\omega \in G$. Then, as a special case of Example \ref{ex:groupoid}, we obtain a commutative Frobenius object in $\Span$ associated to the nerve of $G$, where $\hat{\alpha}(x) = x^{-1}\omega$. In this case, the unit, multiplication, counit, and coumultiplication are given by the following compositions:
\begin{align*}
    \eta&: \bullet \tolabel{e} G,\\
    \mu&: G \times G \tolabel{m} G,\\
    \varepsilon&: G \fromlabel{\omega} \bullet,\\
    \delta&: G \fromlabel{p_2} G \times G \tolabel{(\hat{\alpha}\circ p_1, m)} G \times G.
\end{align*}
To compute \eqref{eqn:partition}, we first consider the trajectories from $x$ to $y$ in $\mu \circ \delta: G \spanto G$. Using the above formulas, we can see that such a trajectory only exists when $y = \omega x$ and is of the form
\[ x \mapsfrom (z,x) \mapsto (z^{-1}\omega, zx) \mapsto \omega x = y.\]
Thus the number of trajectories from $x$ to $y$ in $\mu \circ \delta$ is $|G|$ if $y = \omega x$ and $0$ otherwise.

By iteration, we have that, for any $g \geq 0$, the number of trajectories from $x$ to $y$ in $(\mu \circ \delta)^g$ is $|G|^g$ if $y = \omega^g x$ and $0$ otherwise. Incorporating the unit and counit in \eqref{eqn:partition} has the effect of setting $x=e$ and $y=\omega$. Thus the partition function is
\[ Z(\Sigma_g) = 
\begin{cases}
|G|^g & \mbox{if } \omega^g = \omega,\\
0 & \mbox{otherwise}.
\end{cases}
\]
In particular, if we set $\omega = e$, then $Z(S^2) = 0$ and $Z(\Sigma_g) = |G|^g$ for $g \geq 1$.
\end{example}

\begin{example}\label{ex:twoelement2}
Recall the family of monoids in $\Span$ described in Example \ref{ex:twoelement}, which are all commutative. We now consider the compatible Frobenius structures. Since $X = \{a,b\}$ has two elements, there are two possibilities for $\hat{\alpha}$.

We first consider the case where $\hat{\alpha}$ is the identity map. In this case, condition (1) in Theorem \ref{thm:frobenius} says that $\gamma(a) \in Y_{aaa}$ and $\gamma(b) \in Y_{bab}$. Condition (2) in Theorem \ref{thm:frobenius} implies that there are no other elements of $Y_{bab}$, so $n_{bab} = 1$. However, $n_{bbb}$ remains unconstrained, leaving an infinite family of Frobenius objects in $\Span$. For simplicity of notation, we write $n = n_{bbb}$.

In this case, the unit and counit both give one trajectory between $\bullet$ and $a$. The numbers of trajectories for the multiplication and comultiplication are given in Figure \ref{fig:twoelement}.
\begin{figure}[th]
\begin{tabular}{c|c|c}
$\mu$    & $a$   & $b$ \\ \hline
$(a,a)$  & $1$   & $0$ \\ \hline
$(a,b)$ & $0$   & $1$ \\ \hline
$(b,a)$ & $0$   & $1$ \\ \hline
$(b,b)$ & $1$   & $n$
\end{tabular}
\hspace{.5in}
\begin{tabular}{c|c|c|c|c}
$\delta$ & $(a,a)$ & $(a,b)$ & $(b,a)$ & $(b,b)$ \\ \hline
$a$ & $1$   & $0$   & $0$   & $1$ \\ \hline
$b$ & $0$   & $0$   & $1$   & $n$
\end{tabular}
    \caption{Numbers of trajectories in $\mu$ and $\delta$ for Example \ref{ex:twoelement2} when $\hat{\alpha} = 1$.}
    \label{fig:twoelement}
\end{figure}
From these tables, we can calculate the numbers of trajectories in $\mu \circ \delta$ as the entries in the matrix
\[ A = \mat{2 & n \\ n & n^2+1}.\]
For example, there are $2$ trajectories from $a$ to $a$ (one passing through $(a,a)$ and one passing through $(b,b)$) and $n$ trajectories from $a$ to $b$ (all passing through $(b,b)$). Taking powers of $A$ gives us the numbers of trajectories in $(\mu \circ \delta)^g$ for any $g \geq 0$, so we can use matrix algebra to calculate the partition function:
\begin{align*}
    Z(\Sigma_g) &= \mat{1&0}A^g\mat{1\\0} \\
    &= \mat{1&0} \left(\frac{1}{1+n^2}\mat{1&-n\\n&1}\mat{n^2+2&0\\0&1}\mat{1&n\\-n&1}\right)^g \mat{1\\0}\\
    &= \frac{1}{1+n^2}\mat{1&-n}\mat{(n^2+2)^g&0\\0&1}\mat{1\\-n}\\
    &= \frac{(n^2+2)^g + n^2}{1+n^2}.
\end{align*}

Now we consider the case where $\hat{\alpha}$ is the nontrivial automorphism of $X = \{a,b\}$. In this case, condition (1) in Theorem \ref{thm:frobenius} says that $\gamma(a) \in Y_{bba}$ and $\gamma(b) \in Y_{abb}$. Condition (2) in Theorem \ref{thm:frobenius} implies that there are no elements in $Y_{bbb}$, so $n_{bbb} = 0$. However, $n_{bab}$ remains unconstrained, leaving an infinite family of Frobenius objects in $\Span$. Write $m = n_{bab}$. By calculations similar to the previous case, we obtain the matrix
\[ B = \mat{0&2m\\2&0}\]
with the numbers of trajectories in $\mu \circ \delta$. Then the partition function is
\[ Z(\Sigma_g) = \mat{0 & 1} B^g \mat{1\\0} = \begin{cases}
2^g m^{(g-1)/2} & \mbox{if $g$ odd},\\
0 & \mbox{if $g$ even}.
\end{cases}.\]
\end{example}

\section{Frobenius algebras via \texorpdfstring{$\Span$}{Span}} \label{sec:vect}

Let $\kk$ be a field, and let $\vect_\kk$ denote the category of vector spaces over $\kk$. Let $\FSpan$ denote the subcategory of $\Span$ consisting of finite sets and finite spans.

There is a functor from $\FSpan$ to $\vect_\kk$, which on objects takes a finite set $X$ to the space $\kk[X]$ of $\kk$-valued functions on $X$. Given a finite span $(A, f_1, f_2)$ from $X$ to $Y$, the induced map $\kk[X] \to \kk[Y]$ is obtained by pulling back along $f_1$ and then fiberwise summing over $f_2$. This functor is described, for example, in \cites{dyckerhoff-kapranov,morton:two} and is a simplified version of the degroupoidification functor described in \cite{bhw:groupoidification}.

The functor $\FSpan \to \vect_\kk$ is symmetric monoidal, taking the Cartesian product in $\FSpan$ to the tensor product in $\vect_\kk$. Thus it induces a map taking Frobenius objects in $\FSpan$ to Frobenius algebras over $\kk$, with commutative Frobenius objects going to commutative Frobenius algebras.

Suppose that $X_\bullet$ is a finite $2$-truncated simplicial set satisfying the conditions of Theorem \ref{thm:simp2monoid}, equipped with a bijection $\hat{\alpha}: X_1 \to X_1$, satisfying the conditions of Theorem \ref{thm:frobenius}. Then $X_\bullet$ corresponds to a Frobenius object in $\FSpan$. The induced Frobenius algebra is $\kk[X_1]$, where the multiplication is given by a convolution product
\[ \mu(\varphi, \psi)(x) = \sum_{\substack{\zeta \in X_2,\\ x=d_1^2(\zeta)}} \varphi(d_0^2(\zeta)) \psi(d_2^2(\zeta))\]
for $\varphi, \psi \in \kk[X_1]$. The identity element $\mathbbm{1} \in \kk[X_1]$ is given by
\[ \mathbbm{1}(x) = \begin{cases}
1 & \mbox{ if $x \in d_0^0(X_0)$,} \\
0 & \mbox{ otherwise,}
\end{cases}
\]
and the counit $\varepsilon: \kk[X_1] \to \kk$ is given by
\[ \varepsilon(\varphi) = \sum_{u \in X_0} \varphi(\hat{\alpha} \circ d_0^0(u)).\]
We note that the conditions in Theorems \ref{thm:simp2monoid} and \ref{thm:frobenius} could be interpreted (in the finite case) as necessary and sufficient conditions for the above data to satisfy the axioms of a Frobenius algebra.

\begin{example}\label{ex:groupoidvect}
Let $G_1 \arrows G_0$ be a finite groupoid, and let $X = G_1$ be the corresponding Frobenius object in $\FSpan$ (see Example \ref{ex:groupoid}). Then the induced Frobenius algebra is $\kk[G_1]$, where the multiplication is the convolution product, given by
\[ \mu(\varphi, \psi)(x) = \sum_{\substack{(y,z) \in G_2,\\ x = yz}} \varphi(z) \psi(y)\]
for $\varphi, \psi \in \kk[G_1]$. If the counit is given as in Example \ref{ex:groupoid} by a section $\sigma: G_0 \to G_1$ of the target map, then the induced counit $\kk[G_1] \to \kk$ is given by
\[ \varepsilon(\varphi) = \sum_{p \in G_0} \varphi(\sigma(p)).\]
\end{example}

\begin{example}
A special case of Example \ref{ex:groupoidvect} is the pair groupoid $([n] \times [n]) \arrows [n]$, where $[n] = \{1, \dots, n\}$, and where $\sigma$ is the diagonal map. Then the induced Frobenius algebra $\kk[[n] \times [n]]$ can be identified with the algebra of $n \times n$ matrices over $\kk$.

We note that the functor $\FSpan \to \vect_\kk$ preserves categorical products, taking disjoint unions to direct sums. Thus, by the Artin-Wedderburn theorem, it follows that, if $\kk$ is algebraically closed, then every finite-dimensional semisimple algebra arises from a Frobenius object in $\FSpan$.
\end{example}

\begin{example}
Let $X = \{a,b\}$ be a Frobenius object in $\FSpan$ of the form described in Example \ref{ex:twoelement2}. In this case, $\kk[X]$ is generated as a vector space by two elements, one of which is the unit element $1$, and the other we will call $\theta$. In the case where $\hat{\alpha}$ is the identity map and $n_{bab}=1$, $n_{bbb}=n$, the algebra multiplication is given by $\theta^2 = 1 + n\theta$, and the counit is given by $\varepsilon(1)=1$, $\varepsilon(\theta)=0$. In the case where $\hat{\alpha}$ is the nontrivial automorphism and $n_{bab}=m$, $n_{bbb}=0$, the algebra multiplication is given by $\theta^2=m$, and the counit is given by $\varepsilon(1)=0$, $\varepsilon(\theta)=1$. We remark that the case $m=0$ gives a Frobenius algebra that is isomorphic to the cohomology ring of $S^2$.
\end{example}

\bibliography{frob}
\end{document}